\theoremstyle{definition}
\theoremstyle{remark}
\numberwithin{equation}{section}
\begin{document}

\title[]{Left absorption in products of countable orders}
\author{Garrett Ervin} \author{Ethan Gu}

\begin{abstract}
We classify the countable linear orders $X$ for which there is an order $A$ with at least two points such that the lexicographic product $AX$ is isomorphic to $X$. Given such an $X$, we determine every corresponding order $A$, and identify when $X$ is isomorphic to its square. More generally, we characterize the countable orders that embed at least two disjoint convex copies of themselves.
\end{abstract}

\maketitle

\section{Introduction}

A linear order $X$ is \emph{left-absorbing} if there is a linear order $A$ containing at least two points such that the lexicographically ordered cartesian product $AX$ is order-isomorphic to $X$. Left absorption can be thought of as a self-similarity property: if $AX \cong X$, then $X$ can be partitioned into $A$-many intervals each of which is isomorphic to the entire order. Our goal in this paper is to classify the countable left-absorbing linear orders $X$, and moreover determine for each such $X$ the complete list of orders $A$ that $X$ absorbs. 

A linear order $X$ is \emph{right-cancelling} if for every pair of orders $K$ and $L$ such that $KX \cong LX$ we have $K \cong L$. Left-absorbing orders $X$ are not right-cancelling, since for some $A \not\cong 1$ we have $AX \cong X \cong 1X$, where $1$ denotes the order with a single point. It turns out that being absorbing on the left is the only barrier to being cancellable on the right for linear orders: in \cite{Morel}, Morel proved that if $X$ is non-right-cancelling, then $X$ is left-absorbing.\footnote{Morel ordered products anti-lexicographically, so that \emph{left-absorbing} and \emph{right-cancelling} in this paper translate to \emph{right-absorbing} and \emph{left-cancelling} in hers.} Thus our classification of the countable left-absorbing orders may be viewed as a characterization of the countable orders $X$ that cannot in general be cancelled in isomorphisms of the form $KX \cong LX$.

Given a linear order $X$, an interval $I \subseteq X$ is a \emph{convex copy} of $X$ if $I$ is isomorphic to $X$. An order is left-absorbing if and only if it can be covered by a collection of disjoint convex copies of itself. We will consider more generally linear orders that simply contain disjoint convex copies of themselves. Say that an order $X$ is \emph{self-similar} if it contains two convex copies of itself $I$ and $J$ with $I \cap J = \emptyset$. Our approach to characterizing the countable left-absorbing orders will be to first characterize the countable self-similar orders, and then determine which of these orders are actually absorbing. 

Here is the characterization. Identify each natural number $n \in \mathbb{N}$ with the set of its predecessors $\{0, 1, \ldots, n-1\}$.  Let $N$ denote either a fixed natural number $n \geq 1$, or $\mathbb{N}$. Fix a partition of the rationals $\mathbb{Q} = \bigcup_{k \in N} Q_k$ into subsets $Q_k$, each of which is dense in $\mathbb{Q}$. For each $k \in N$, fix a countable linear order $I_k$. Let $\mathbb{Q}[I_k]$ denote the order obtained by replacing each point $q \in \mathbb{Q}$ by an order from $\{I_k\}_{k \in N}$, so that $q$ is replaced by $I_k$ if $q \in Q_k$. Following \cite[pg. 116]{Rosenstein}, we call $\mathbb{Q}[I_k]$ the \emph{shuffle} of the orders $I_k$. An interval $I \subseteq \mathbb{Q}[I_k]$ is \emph{negligible} if $I$ is a subinterval of one of the $I_k$, otherwise $I$ is \emph{non-negligible}.  

\theoremstyle{definition}
\newtheorem*{introthm}{Theorem}
\begin{introthm}\label{introthm} \,\ 
Suppose that $X$ is a countable linear order. Then $X$ is self-similar if and only if there is a shuffle $\mathbb{Q}[I_k]$ and a non-negligible interval $I \subseteq \mathbb{Q}[I_k]$ such that $X$ is isomorphic to $I$.  
\end{introthm}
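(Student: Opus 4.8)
I would prove the two implications separately; the backward direction is routine, and the forward direction is where the real work lies.

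\smallskip
\noindent\emph{Backward direction.} Suppose $I \subseteq \mathbb{Q}[I_k]$ is non-negligible. Let $P \subseteq \mathbb{Q}$ be the set of $q$ whose block $I_{k(q)}$ meets $I$; non-negligibility says precisely that $P$ has at least two points, so $P$ is a dense subinterval of $\mathbb{Q}$. Using that the restriction of $\mathbb{Q}[I_k]$ to any open subinterval of $\mathbb{Q}$ is again isomorphic to $\mathbb{Q}[I_k]$ — the induced partition is still a dense partition indexed by $N$; this Cantor‑style uniqueness for shuffles I would isolate as a lemma — one gets $I \cong S + \mathbb{Q}[I_k] + T$, where $S$ is a final segment of the block type at $\min P$ when $P$ has a least element and is absent otherwise, and symmetrically $T$ is an initial segment of the block type at $\max P$. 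To witness self-similarity, work inside the factor $\mathbb{Q}[I_k]$ of $I$: choose blocks $B_0 < B_1 < B_0' < B_1'$ with $B_0,B_0'$ of the type realizing $S$ and $B_1,B_1'$ of the type realizing $T$ (possible since each type is dense), and let the first copy be the final segment of $B_0$ matching $S$, then the shuffle strictly between $B_0$ and $B_1$, then the initial segment of $B_1$ matching $T$; this is $\cong S + \mathbb{Q}[I_k] + T \cong I$, and the copy built analogously from $B_0', B_1'$ is disjoint from it because $B_1 < B_0'$. (If $S$ or $T$ is absent, drop the corresponding piece.)

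\smallskip
\noindent\emph{Forward direction.} Suppose $X$ is countable and self-similar, with disjoint convex copies $I < J$. First, $X$ is non-scattered: iterating the self-embeddings inside each copy yields a perfect tree of nested pairs of disjoint convex subintervals, which forces a copy of $\mathbb{Q}$ (alternatively, one argues directly that a scattered order is not self-similar). Now pass to the \emph{scattered condensation}: put $x \approx y$ iff the closed interval between $x$ and $y$ is scattered. This is an equivalence relation (transitivity uses that a copy of $\mathbb{Q}$ inside $[x,z]$ restricts to one inside $[x,y]$ or inside $[y,z]$), its classes are convex, each class is scattered, and the quotient $P := X/{\approx}$ is a countable dense linear order with at least two points (no two classes are adjacent, since a union of two adjacent scattered intervals is scattered). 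Hence $X \cong \sum_{p \in P} C_p$ with each $C_p$ scattered, and this is respected by every isomorphism since $\approx$ is order-definable. The plan is then to show: (a) every isomorphism type occurring among the \emph{interior} classes (those $C_p$ with $p$ not an endpoint of $P$) occurs densely in $P$; and (b) if $P$ has a least element $p_-$ then $C_{p_-}$ is isomorphic to a final segment of some interior class, and symmetrically for a greatest element. Granting (a), the interior part $\sum_{p \in \operatorname{int}(P)} C_p$ is isomorphic to the shuffle $\mathbb{Q}[I_k]$ whose factors are exactly the interior class types (at least one, at most countably many), again by the uniqueness lemma; granting (b), $X \cong S + \mathbb{Q}[I_k] + T$ with $S,T$ final/initial segments of shuffle factors, present exactly when $P$ has a least/greatest element — and then $X$ is a non-negligible interval of $\mathbb{Q}[I_k]$ exactly as in the backward direction. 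Claim (b) follows from the self-embeddings: since $I < J$ and $X$ is infinite, the image of $J$ in $P$ omits $p_-$, so the least class of $J$ — isomorphic to $C_{p_-}$ because $J \cong X$ — is a final segment of a genuine interior class of $X$; the copy $I$ handles $p_+$ symmetrically.

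\smallskip
The technical heart, and the step I expect to be the main obstacle, is claim (a). The intended mechanism: iterating the embeddings $I, J$ produces convex copies of $X$ that become dense in $X$ with respect to $\approx$, and any convex copy of $X$ contains a copy of every type occurring in $X$ sitting at one of its interior (hence genuine) classes; so every occurring interior type is dense. The difficulty is that iterating the embeddings leaves ``leftover'' regions around and between the copies which need not be scattered, so one must either choose the copies carefully or argue separately that such leftovers cannot permanently block a subinterval of $P$. In practice I would most likely fold (a) into a single back-and-forth construction that builds the target shuffle $\mathbb{Q}[I_k]$ and an isomorphism of $X$ onto a non-negligible interval of it simultaneously, using the self-embeddings of $X$ at each step to supply the convex copies the construction demands and using the scattered condensation to decide which shuffle to aim for; the handling of the at most two boundary classes is as in (b).
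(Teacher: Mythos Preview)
Your backward direction is fine and essentially matches the paper's. The forward direction, however, has a genuine gap: the scattered condensation is the wrong tool, and your claim (a) is false for it.

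Here is a concrete counterexample. Let $X = \mathbb{Q}[\mathbb{Q}\mathbb{N}, \mathbb{Z}]$, the shuffle of the two orders $\mathbb{Q}\mathbb{N}$ and $\mathbb{Z}$. This $X$ is self-similar (it is a shuffle). Its scattered condensation classes are the individual copies of $\mathbb{N}$ inside each $\mathbb{Q}\mathbb{N}$ block, together with the $\mathbb{Z}$ blocks; the quotient $P$ is isomorphic to $\mathbb{Q}[\mathbb{Q},1] \cong \mathbb{Q}$. But the $\mathbb{Z}$-type classes are \emph{not} dense in $P$: pick two $\mathbb{N}$-classes lying inside a single $\mathbb{Q}\mathbb{N}$ block; between them every scattered class has type $\mathbb{N}$, never $\mathbb{Z}$. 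Your mechanism for (a) breaks down here exactly as you feared: between those two classes there is no convex copy of $X$ at all (the interval is a subinterval of $\mathbb{Q}\mathbb{N}$, which is minimal for $X$), so iterating the self-embeddings can never place a copy of $X$ there, and the ``leftover'' region permanently blocks the type $\mathbb{Z}$. Folding the argument into a back-and-forth does not help, because the target shuffle $\mathbb{Q}[\mathbb{N},\mathbb{Z}]$ that the scattered condensation suggests is simply not isomorphic to $X$.

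The paper avoids this by using a different condensation, tailored to $X$ itself: declare $x \sim y$ when the closed interval $[\{x,y\}]$ contains no convex copy of $X$. Transitivity uses self-similarity (a convex copy of $X$ in $[x,z]$ yields two disjoint subcopies, one of which must land in $[x,y]$ or $[y,z]$). The quotient is dense for the same reason. Crucially, by construction any two distinct $\sim$-classes have a convex copy of $X$ strictly between them, and the isomorphism $f \colon X \to J$ onto that copy carries $\sim$-classes of $X$ bijectively onto $\sim$-classes of $X$ contained in $J$; this is what makes your claim (a) true for $\sim$ and immediately gives the shuffle structure. The boundary claim (b) is then handled the same way. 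In your example this $\sim$ recovers the blocks $\mathbb{Q}\mathbb{N}$ and $\mathbb{Z}$ themselves, not their scattered pieces.
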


After proving the theorem, we will show that whether a countable self-similar order $X$ is left-absorbing depends on the location in $\mathbb{Q}[I_k]$ of the cuts made by the left and righthand sides of the interval $I$. In the cases when $X$ is left-absorbing, it is where these cuts fall that moreover determines the orders $A$ that are absorbed by $X$. We give the precise statement below in Theorem \ref{thm2}. Typically there are many such $A$. In particular, we will show that $X$ absorbs \emph{every} countable order if and only if $X$ is isomorphic to a shuffle $\mathbb{Q}[I_k]$. 

We were originally motivated by the problem of characterizing the countable orders $X$ that are isomorphic to their lexicographic squares $X^2$. When $X$ has no endpoints, or only a single endpoint, such orders are characterized in \cite{Ervin}. The case when $X$ has two endpoints turns out to be harder, and inspired the investigation of self-similar orders that led to our more general characterization. After we prove it, we will use it to characterize the countable orders with both endpoints that are isomorphic to their squares. 

Apart from the problems section at the end of the paper, we restrict our attention to countable orders. Though our approach can be used to get information about self-similar orders in general, that we are able to characterize such orders when they are countable relies on the fact that there is a unique (up to endpoints) countable dense order type, that of the rationals $\mathbb{Q}$. The order types of uncountable dense linear orders are very diverse, and partly as a result it seems that any classification of the uncountable self-similar orders, even of a given cardinality, would be significantly more complicated. 

\section{Preliminaries}

All linear orders are ordered strictly. We will refer to linear orders $(X, <)$ by their underlying sets $X$. The cardinality of a linear order is the cardinality of its underlying set. \emph{Countable} means finite or countably infinite. \emph{Order} always means linear order, and \emph{isomorphic} always means order-isomorphic. We write $X \cong Y$ to mean $X$ is isomorphic to $Y$. We will not distinguish notationally between the order relations $<_X$ and $<_Y$ of different orders $X$ and $Y$, but use $<$ for all order relations. 

An \emph{order type} is an isomorphism class of linear orders. Two orders have the same type if and only if they are isomorphic. We may occasionally confuse an order with its order type.

If $X$ is a linear order, then $X^*$ denotes the reverse order. That is, $X$ and $X^*$ share the same underlying set of points, but we have $x < y$ in $X^*$ if and only if $x > y$ in $X$. 

Given an order $X$, a subset $I \subseteq X$ is an \emph{interval} if it is convex, that is, whenever $x < z < y$ and $x, y \in I$ then $z \in I$. Given $x, y \in X$ with $x \leq y$, we use $(x, y)$ to denote the interval $\{z \in X: x < z < y\}$. Likewise $[x, y)$, $(x, y]$, and $[x, y]$ have their usual meanings. It need not be true that every interval $I \subseteq X$ can be written in one of these four forms; this holds if and only if every non-empty subset of $X$ has both a greatest lower bound and least upper bound. Singletons $\{x\} = [x, x]$ are considered intervals. Given an arbitrary pair of points $x, y \in X$, we define $[\{x, y\}]$ to be $[x, y]$ if $x \leq y$, or $[y, x]$ if $y < x$. 

An interval $I \subseteq X$ is \emph{open} if it contains neither a left nor right endpoint, \emph{half-open} if it has an endpoint on one side but not the other, and \emph{closed} if it has endpoints on both sides. Intervals of the form $(x, y)$ need not be open, but intervals of the form $[x, y]$ are always closed.

If $I, J \subseteq X$ are intervals, we write $I < J$ if $I$ lies entirely to the left of $J$, that is, if for every $x \in I$ and $y \in J$ we have $x < y$. 

An interval $I \subseteq X$ is an \emph{initial segment} of $X$ if whenever $x \in I$ and $y < x$ then $y \in I$. A \emph{final segment} is an interval $J \subseteq X$ whose complement is an initial segment. If $I$ is an initial segment and $J = X \setminus I$ is the corresponding final segment, the pair $(I, J)$ is called a \emph{cut}. We think of a cut $(I, J)$ as the place between the segments $I$ and $J$. Note that to specify a cut, it is enough to specify one of $I$ or $J$. 

If $I \subseteq X$ is any interval, the \emph{left end} of $I$ is the cut $(I_<, I_{\geq})$, where $I_<$ denotes the initial segment $\{x \in X: \forall y \in I (x < y)\}$. Symmetrically, the \emph{right end} if $I$ is the cut $(I_{\leq}, I_>)$, where $I_>$ denotes the final segment of $X$ consisting of points lying strictly above every point in $I$. 

An order $X$ is \emph{dense} if for any two distinct points of $X$ there is a third point lying strictly between them, and moreover $X$ contains at least two points. Given a pair of points $x, y \in X$, if $x < y$ and there is no $z$ such that $x < z < y$, we say that $y$ is the \emph{successor} of $x$ and $x$ is the \emph{predecessor} of $y$. Thus $X$ is dense if and only if it has at least two points and none of its points has a successor. A subset $Y \subseteq X$ is \emph{dense in $X$} if for any two points of $X$, either both belong to $Y$ or there is a point strictly between them that belongs to $Y$. It is possible for a suborder $Y \subseteq X$ to be dense as a linear order, but not dense in $X$. But if $X$ is dense as a linear order and $Y$ is dense in $X$, then $Y$ must be dense as a linear order.  

We write $\mathbb{Q}$ for the set of rationals equipped with its usual order. Likewise we write $\mathbb{N}$ and $\mathbb{Z}$ for the sets of natural numbers and integers in their usual orders. $\mathbb{N}$ includes $0$. We identify each natural number $n \in \mathbb{N}$ with the set of its predecessors in their usual order $0 < 1 < \ldots < n-1$. 

The order types of countable dense linear orders were characterized by Cantor, who showed that if $X$ is a countable dense linear order with neither a left nor right endpoint, then $X$ is isomorphic to $\mathbb{Q}$. It follows that, up to isomorphism, there are exactly four countable dense orders, which we denote $\mathbb{Q}$, $1 + \mathbb{Q}$, $\mathbb{Q} + 1$, and $1 + \mathbb{Q} + 1.$  The latter three are, respectively, the rationals appended with a lefthand endpoint, with a righthand endpoint, and with both endpoints. Concretely, these orders are isomorphic to $\mathbb{Q} \cap [0, 1)$, $\mathbb{Q} \cap (0, 1]$ and $\mathbb{Q} \cap [0, 1]$.

We will need a generalization of Cantor's theorem, due to Skolem. Suppose $X$ and $Y$ are countable dense linear orders without endpoints (so that both are isomorphic to $\mathbb{Q}$). Let $N$ denote either a fixed natural number $n$, or $\mathbb{N}$. Fix partitions $X = \bigcup_{k \in N} X_k$ and $Y = \bigcup_{k \in N} Y_k$ such that each subset $X_k$ is dense in $X$ and each $Y_k$ is dense in $Y$. Skolem proved that there is an isomorphism $f: X \rightarrow Y$ such that $f[X_k] = Y_k$ for all $k \in N$. Informally, this says that if we color two copies of the rationals $X$ and $Y$ with $N$-many colors, so that every color appears densely often in each, then there is an isomorphism $f$ from $X$ to $Y$ that takes each $x \in X$ to some $y$ of the same color. Both Cantor's theorem and Skolem's generalization are proved by essentially the same well-known back-and-forth argument.  

Given a linear order $X$, and for each $x \in X$ an order $I_x$, we let $X(I_x)$ denote the \emph{replacement of $X$} by the orders $I_x$. This is the order obtained by replacing each point $x \in X$ by the corresponding order $I_x$. Formally, $X(I_x)$ is the set of pairs $\{(x, i): x \in X, i \in I_x\}$ ordered lexicographically by the rule $(x, i) < (y, j)$ if either $x < y$ (in $X$), or $x = y$ and $i < j$ (in $I_x = I_y$). If there is an order $Y$ such that for every $x \in X$ we have $I_x = Y$, then we call the replacement $X(I_x)$ the \emph{lexicographic product} of $X$ and $Y$, and denote it $XY$. 

A replacement $X(I_x)$ is sometimes called an \emph{ordered sum} and denoted $\sum_{x \in X} I_x$. We will usually use the replacement notation, but specifically for replacements of $2$, $\mathbb{N}$, $\mathbb{N}^*$, and $\mathbb{Z}$ we will sometimes use summation notation. More explicitly, given two orders $X$ and $Y$, we write $X + Y$ for the order, unique up to isomorphism, with an initial segment isomorphic to $X$ whose corresponding final segment is isomorphic to $Y$. Formally, we view $X + Y$ as the replacement of $2 = \{0, 1\}$ by the orders $I_0 = X$ and $I_1 = Y$. Given a collection of orders $\{X_i\}_{i \in \mathbb{Z}}$, we write $\ldots + X_{-1} + X_0 + X_1 + X_2 + \ldots$ for the replacement $\mathbb{Z}(X_i)$. We also use the expressions $X_0 + X_1 + \ldots$ and $\ldots + X_1 + X_0$ for the replacements $\mathbb{N}(X_i)$ and $\mathbb{N}^*(X_i)$, respectively.

The sum and product are both associative, in that $(X + Y) + Z \cong X + (Y + Z)$ and $(XY)Z \cong X(YZ)$ for all orders $X, Y, Z$. We freely drop parentheses in such expressions. Neither operation is commutative in general. Products on the right distribute over sums, that is $(X + Y)Z \cong XZ + YZ$ for all $X, Y, Z$. More generally, we have the distribution law $X(I_x)Y \cong X(I_xY)$ for any replacement $X(I_x)$ and order $Y$ multiplied on the right. On the other hand, products on the left do not distribute over sums or replacements in general.  

Given a linear order $X$, an equivalence relation $\sim$ on $X$ is called a \emph{condensation} if all of its equivalence classes are intervals. We write $X / \sim$ for the set of equivalence classes of a condensation $\sim$, and write $c(x)$ for the $\sim$-class of a given $x \in X$. The map $c: X \rightarrow X / \sim$ is called the \emph{condensation map}. Since the members of $X / \sim$ are disjoint intervals of $X$, they are naturally linearly ordered by the rule $c(x) < c(y)$ if $c(x) \neq c(y)$ and $x < y$ in $X$. We call this the \emph{induced order} on $X / \sim$. Observe that the condensation map defines an order-homomorphism from $X$ to $X / \sim$ under the induced order. That is, if $x < y$ in $X$ then $c(x) \leq c(y)$ in $X / \sim$. Conversely, given a linear order $L$ and a surjective homomorphism $c: X \rightarrow L$, the relation $\sim$, defined by $x \sim y$ if $c(x) = c(y)$, is a condensation of $X$, and $X / \sim$ is isomorphic to $L$. 

The notion of a condensation is inverse to that of a replacement. If $L(I_l)$ is a replacement of an order $L$, then the relation $\sim$ on $L$, defined by $x \sim y$ if $x, y$ belong to the same replacing order $I_l$, is a condensation of $L(I_l)$ whose equivalence classes are exactly the $I_l$. We clearly have that $L(I_l) / \sim$ is isomorphic to $L$. And if $L = X / \sim$ is the quotient of an order $X$ by a condensation $\sim$, then $X$ is isomorphic to the natural replacement of $L$ by the orders $c(x)$.

A special kind of replacement of $\mathbb{Q}$ called a shuffle will play a central role in our classification of the left-absorbing orders. These are the replacements of $\mathbb{Q}$ in which every replacing order appears densely often. More precisely, let $N$ denote a fixed natural number $n \geq 1$, or $\mathbb{N}$. Partition the rationals as $\mathbb{Q} = \bigcup_{k \in N} Q_k$ so that each subset $Q_k$ is dense in $\mathbb{Q}$. For each $k \in N$, fix a linear order $I_k$. For each $q \in \mathbb{Q}$, let $I_q = I_k$ if $q \in Q_k$. We denote the replacement $\mathbb{Q}(I_q)$ by $\mathbb{Q}[I_k]$, and call it the \emph{shuffle} of the orders $I_k$. Up to isomorphism, there is only one way to shuffle a given set of orders $\{I_k\}_{k \in N}$. For if $\mathbb{Q} = \bigcup_{k \in N} R_k$ is another partition of the rationals into $N$-many sets $R_k$, each dense in $\mathbb{Q}$, then by Skolem's theorem there is an isomorphism $f: \mathbb{Q} \rightarrow \mathbb{Q}$ that such that $f[Q_k] = R_k$ for all $k \in N$. This $f$ determines an isomorphism between the shuffle $\mathbb{Q}[I_k]$ with respect to the partition $\bigcup_k Q_k$ and the shuffle $\mathbb{Q}[I_k]$ with respect to the partition $\bigcup_k R_k$, namely the map defined by the rule $(q, i) \mapsto (f(q), i)$. Thus there is no harm in not specifying the particular partition used to shuffle the $I_k$. We will also write listed expressions like $\mathbb{Q}[A, B, C]$ to denote the shuffle of the orders $A, B, C$, or $\mathbb{Q}[A_0, A_1, \ldots]$ to denote the shuffle of the orders $A_i$, etc.

The discussion in the previous paragraph shows that the order type of a shuffle $\mathbb{Q}[I_k]$ depends only on the collection of replacing orders $\{I_k\}$. However, when we need to refer to a particular instance of a replacing order, say the order $I_{k_0}$ as the order that replaces the point $q_0 \in Q_{k_0}$, we will think of $\mathbb{Q}[I_k]$ as a concrete replacement $\mathbb{Q}(I_q)$ with respect to a fixed partition $\mathbb{Q} = \bigcup_k Q_k$, and refer to this interval as $I_{q_0}$. 

Given a shuffle $\mathbb{Q}[I_k]$ and an interval $I \subseteq \mathbb{Q}[I_k]$, we say that $I$ is \emph{negligible} if there is $q \in \mathbb{Q}$ such that $I \subseteq I_q$. We note that this definition depends on viewing the shuffle $\mathbb{Q}[I_k]$ as a concrete replacement $\mathbb{Q}(I_q)$, with each $I_q$ drawn from the collection $\{I_k\}$. It is possible that for distinct collections of orders $\{I_k\}$ and $\{J_l\}$, the shuffles $\mathbb{Q}[I_k]$ and $\mathbb{Q}[J_l]$ are isomorphic. Moreover, an order type $I$ may appear as a negligible interval in the first representation $\mathbb{Q}[I_k]$ but not in the second. We return to the issue of non-uniqueness in the representation of shuffles in Section 4. We will show that while a given shuffle can always be represented in more than one way, it has a canonical representation. If the notion of negligible is defined with respect to this canonical representation, it becomes a property of the order type of the shuffle as opposed to a specific representation.

\section{Examples}

In this section we give several typical examples of countable left-absorbing linear orders, as well as an example of a countable self-similar order that is not left-absorbing. We will show in Section 4 that every countable self-similar order $X$, and hence every countable left-absorbing order $X$, appears as a non-negligible interval $I$ in some shuffle $\mathbb{Q}[I_k]$. Before considering specific examples, we begin by showing below that conversely every order that appears as a non-negligible interval in a shuffle is self-similar. 

\subsection{Shuffles and their intervals}

Observe that if $X = \mathbb{Q}$, then for any countable linear order $A$ we have $AX \cong X$. This is because, regardless of the density of $A$ and regardless of whether $A$ has any endpoints, $AX$ is countable, dense, and has no endpoints, and hence is isomorphic to $\mathbb{Q}$ by Cantor's theorem.

More generally, suppose that $X = \mathbb{Q}[I_k]$ is a shuffle of some finite or countably infinite collection of orders $\{I_k\}_{k \in N}$. We claim that $AX \cong X$ for any countable order $A$. To see this, fix a countable order $A$. Let $\mathbb{Q} = \bigcup_{k \in N} Q_k$ be the partition of $\mathbb{Q}$ corresponding to the shuffle $\mathbb{Q}[I_k]$. Then $A\mathbb{Q} \cong \mathbb{Q}$, and moreover $A\mathbb{Q}$ naturally inherits a partition into $N$-many dense subsets from the partition of $\mathbb{Q}$. Explicitly, define $R_k = \{(a, q) \in A\mathbb{Q}: q \in Q_k\}$. Then $A\mathbb{Q} = \bigcup_{k \in N} R_k$ is a partition of $A\mathbb{Q}$ into $N$-many subsets, each of which is dense in $A\mathbb{Q}$. We may view $A\mathbb{Q}[I_k]$ either as the product of $A$ with the shuffle $\mathbb{Q}[I_k]$, or as the replacement of $A\mathbb{Q}$ (which is isomorphic to $\mathbb{Q}$) by the orders $I_k$ according to the partition $A\mathbb{Q} = \bigcup_k R_k$. By Skolem's theorem there is an isomorphism $f: \mathbb{Q} \rightarrow A\mathbb{Q}$ such that $f[Q_k] = R_k$ for all $k \in N$. This isomorphism naturally determines an isomorphism of $X = \mathbb{Q}[I_k]$ with $AX = A\mathbb{Q}[I_k]$, namely the map $F: \mathbb{Q}[I_k] \rightarrow A\mathbb{Q}[I_k]$ defined by $F(q, i) = (f(q), i)$. This map is well-defined exactly because $q \in Q_k$ if and only if $f(q) \in R_k$, so that the second coordinates $i$ are always from the same replacing order $I_k$. Since $F$ is well-defined, it follows immediately from its definition that $F$ is an isomorphism of $A\mathbb{Q}[I_k]$ and $\mathbb{Q}[I_k]$, that is, of $AX$ and $X$.

In Section 4 we will show conversely that if $X$ is a countable order that left-absorbs every countable order $A$, then $X$ is a shuffle. 

Fix a shuffle $\mathbb{Q}[I_k]$. It will be important for us to understand the possible forms an interval $I \subseteq \mathbb{Q}[I_k]$ can take. First, observe that it follows from Cantor's theorem that every interval $J \subseteq \mathbb{Q}$ is isomorphic to one of $\mathbb{Q}$, $1 + \mathbb{Q}$, $\mathbb{Q}+1$, or $1 + \mathbb{Q} + 1$, depending on whether $J$ is open, half-open to the right, half-open to the left, or closed, respectively. If $\mathbb{Q}$ comes equipped with a partition into dense subsets $\mathbb{Q} = \bigcup_k Q_k$, then $J = \bigcup_k (Q_k \cap J)$ is a partition of $J$ into dense subsets. If $J$ is open, then by Skolem's theorem there is an isomorphism $f$ between $\mathbb{Q}$ and $J$ that respects the two partitions. It follows that $\mathbb{Q}[I_k] \cong J[I_k]$, as witnessed by the isomorphism $(q, i) \mapsto (f(q), i)$. 

Fix an interval $I \subseteq \mathbb{Q}[I_k]$. Let $c: \mathbb{Q}[I_k] \rightarrow \mathbb{Q}$ be the natural condensation map onto $\mathbb{Q}$ defined by $c(x) = q$ if $x \in I_q$. Then $c[I] = \{q \in \mathbb{Q}: I \cap I_q \neq \emptyset\}$. The interval $I$ is negligible if $I \subseteq I_q$ for some $q \in \mathbb{Q}$, that is, if $c[I]$ is a singleton. If $I$ is not negligible, let $c[I]^-$ denote the interval obtained by deleting any endpoints from $c[I]$, if they exist. Then (since $\mathbb{Q}$ is dense) $c[I]^-$ is an open interval in $\mathbb{Q}$, and hence $c[I]^-[I_k]$ is isomorphic to $\mathbb{Q}[I_k]$. 

If $c[I]$ has a left endpoint $l$, then $I_l$ is the leftmost interval among the $I_q$ for which $I \cap I_l \not\cong \emptyset$. Notice that $I \cap I_l$ is a final segment of $I_l$, namely the final segment $L$ whose left end coincides with the left end of $I$. We have $L = I_l$ if the left ends of $I$ and $I_l$ coincide, that is, if $I_l$ is an initial segment of $I$. Symmetrically, if $c[I]$ has a right endpoint $r$, then $I$ has a final segment $R$ that is an initial segment of $I_r$, and is equal to $I_r$ if the right ends of $I$ and $I_r$ coincide. Combining these observations with those in the previous paragraph, we see that if $I$ is a non-negligible interval of $\mathbb{Q}[I_k]$, we have $I \cong L + \mathbb{Q}[I_k] + R$, where $L$ is the final segment of some $I_{k_0}$, or empty, and $R$ is an initial segment of some $I_{k_1}$, or empty. Conversely, every order of this form can be realized as an interval in $\mathbb{Q}[I_k]$. For example, say, if $L$ is a final segment of $I_{k_0}$ and $R$ is an initial segment of $I_{k_1}$, choose $l, r \in \mathbb{Q}$ with $l < r$ such that $I_l = I_{k_0}$ and $I_r = I_{k_1}$. This is always possible since the points in $\mathbb{Q}$ replaced by $I_{k_0}$ are dense, and likewise for $I_{k_1}$. Let $I \subseteq \mathbb{Q}[I_k]$ be the interval whose left end corresponds with the left end of $L$ in $I_l$, and whose right end corresponds with the right end of $R$ in $I_r$. Then $I \cong L + \mathbb{Q}[I_k] + R$, as desired. The cases when one or both of $L$ and $R$ are $\emptyset$ are similar.

\theoremstyle{definition}
\newtheorem*{prop}{Proposition}
\begin{prop}\label{prop} \,\ 
If $I \subseteq \mathbb{Q}[I_k]$ is a non-negligible interval, then $I$ is self-similar.
\end{prop}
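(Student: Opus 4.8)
The plan is to read off the conclusion from the structural description of non-negligible intervals established in the paragraph immediately preceding the statement. There it is shown that a non-negligible interval $I \subseteq \mathbb{Q}[I_k]$ satisfies $I \cong L + M + R$, where $M = \bigcup_{q \in c[I]^-} I_q$ is the subinterval of $I$ made up of all the blocks $I_q$ with $q$ in the open core $c[I]^-$ of $c[I]$ — so that $M \cong \mathbb{Q}[I_k]$ — while $L$ is either empty or a final segment of some block $I_{k_0}$, and $R$ is either empty or an initial segment of some block $I_{k_1}$. Since $M$ is genuinely an interval sitting inside $I$, it suffices to find inside $M$ two disjoint intervals $I^{(1)} < I^{(2)}$, each isomorphic to $I$; these are then disjoint convex copies of $I$ contained in $I$, witnessing that $I$ is self-similar.

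To produce $I^{(1)}$ and $I^{(2)}$, I would perform the construction from the preceding paragraph (showing that every order of the form $L + \mathbb{Q}[I_k] + R$ is realized as an interval of $\mathbb{Q}[I_k]$) twice over, in two disjoint regions of the rational line $c[I]^- \cong \mathbb{Q}$ that underlies $M$. Because $M$ is a shuffle, every colour class $Q_k \cap c[I]^-$ is dense in $c[I]^-$; hence, proceeding from left to right, I can pick rationals $l_1 < r_1 < l_2 < r_2$ in $c[I]^-$ with $I_{l_j} \cong I_{k_0}$ whenever $L \ne \emptyset$ and $I_{r_j} \cong I_{k_1}$ whenever $R \ne \emptyset$. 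For $j = 1, 2$, let $I^{(j)}$ be the interval of $M$ whose left end is the left end of $L$ inside $I_{l_j}$ when $L \ne \emptyset$, and otherwise the cut with initial segment $\bigcup_{q \le l_j} I_q$, and whose right end is the right end of $R$ inside $I_{r_j}$ when $R \ne \emptyset$, and otherwise the cut with final segment $\bigcup_{q \ge r_j} I_q$. In every case the condensation image of $I^{(j)}$ is an interval of $\mathbb{Q}$ with open core $(l_j, r_j)$, and $(l_j, r_j)[I_k] \cong \mathbb{Q}[I_k]$ by Skolem's theorem, so that $I^{(j)} \cong L + \mathbb{Q}[I_k] + R \cong I$ by the very argument recalled above. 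And $I^{(1)} < I^{(2)}$, hence they are disjoint, because every point of $I^{(j)}$ has condensation image in $[l_j, r_j]$ while $r_1 < l_2$.

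I do not expect a real obstacle: the proof is nothing more than a double application of a construction already carried out. The points needing care are organizational — correctly stating the left and right ends of $I^{(j)}$ in the four cases according to whether $L$ and $R$ are empty — together with the one genuine subtlety that the four rationals must be chosen in the open core $c[I]^-$, not merely in $c[I]$, so as to ensure that each $I^{(j)}$ is contained in the block union $M$ and therefore in $I$.
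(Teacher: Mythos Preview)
Your proof is correct and follows essentially the same route as the paper: both use the decomposition $I \cong L + \mathbb{Q}[I_k] + R$ and then pick four rationals $l_1 < r_1 < l_2 < r_2$ of the right colours to carve out two disjoint subintervals of the middle shuffle summand, each isomorphic to $I$. Your version is slightly more careful in that you insist the four rationals lie in the open core $c[I]^-$ (so the copies visibly sit inside $I$) and you spell out the endpoint handling for all four emptiness cases, whereas the paper treats only the case $L, R \neq \emptyset$ and leaves the rest to the reader.
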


\begin{proof} By above, we have $I \cong L + \mathbb{Q}[I_k] + R$ for $L$ a final segment of some $I_{k_0}$, or empty, and $R$ an initial segment of some $I_{k_1}$, or empty. Assume neither $L$ nor $R$ is empty; if either one is empty, the following construction can be easily modified. Choose $l < r < l' < r'$ in $\mathbb{Q}$ such that $I_l = I_{l'} = I_{k_0}$ and $I_r = I_{r'} = I_{k_1}$. Let $I_0$ be the interval whose condensation $c[I_0]$ is the interval $[l, r]$ in $\mathbb{Q}$, such that $I \cap I_l = L$ and $I \cap I_r = R$. Likewise let $I_1$ be the interval with $c[I_1] = [l', r']$ such that $I_1 \cap I_{l'} = L$ and $I_1 \cap I_{r'} = R$. Then by construction $I \cong I_0 \cong I_1 \cong L + \mathbb{Q}[I_k] \cong R$. Since $I_0 \cap I_1 = \emptyset$, we have that $I$ is self-similar. \end{proof}

We will show in Section 4 that conversely if $X$ is a countable self-similar order, then $X$ is isomorphic to a non-negligible interval $I \cong L + \mathbb{Q}[I_k] + R$ of some shuffle $\mathbb{Q}[I_k]$. Moreover, we will show that such an $X$ is left-absorbing if and only if either $L = R = \emptyset$, or $L = I_{k_0}$ and $R = I_{k_1}$ for some shuffled orders $I_{k_0}$ and $I_{k_1}$, or $R + L = I_k$ for some shuffled order $I_k$.

\subsection{Specific examples} We now give some specific examples of countable self-similar and left-absorbing linear orders. We begin with left products of the four countable dense order types.

If $X = \mathbb{Q}$, then we have $AX \cong X$ for every countable order $A$, as we observed. 

If $X = 1 + \mathbb{Q}$, then if $A$ is countable and has a left endpoint, $AX$ will also be countable and have a left endpoint. Furthermore, regardless of the density of $A$ and regardless of whether $A$ has a right endpoint, $AX$ will be dense and have no right endpoint, so that $AX \cong 1 + \mathbb{Q} \cong X$. If $A$ does not have a left endpoint, then $AX$ will also have no left endpoint but will still be dense, so that $AX \cong \mathbb{Q}$. Hence $AX \cong X$ if and only if $A$ is countable and has a left endpoint. 

Symmetrically, if $X = \mathbb{Q} + 1$, we have $AX \cong X$ if and only if $A$ is countable and has a right endpoint.

The case when $X = 1 + \mathbb{Q} + 1$ is peculiar. For $AX \cong X$ to hold, $A$ must have both a left and right endpoint, since otherwise $AX$ will be missing at least one endpoint. But unlike in the previous cases, $A$ must also be dense. For if there are points $a, b \in A$ with $b$ the successor of $a$, then since in the product $AX$ the interval $I_a \cong 1 + \mathbb{Q} + 1$ has a top point $x$, and $I_b = 1 + \mathbb{Q} + 1$ has a bottom point $y$, there is a point in $AX$ (namely $x$) with a successor (namely $y$), so that $AX$ is not dense and thus not isomorphic to $X = 1 + \mathbb{Q} + 1$. Up to isomorphism there is only one countable dense linear order with both endpoints, namely $1 + \mathbb{Q} + 1$. It is not hard to see that $(1 + \mathbb{Q} + 1)(1 + \mathbb{Q} + 1) \cong 1 + \mathbb{Q} + 1$. Hence in this case $AX \cong X$ if and only if $A \cong X = 1 + \mathbb{Q} + 1$.

Thus each of the orders $\mathbb{Q}$, $1 + \mathbb{Q}$, $\mathbb{Q} + 1$, and $1 + \mathbb{Q} + 1$ is left-absorbing. The class of orders $A$ absorbed by a given one of the four depends on its endpoint configuration. We will see that this dependence reappears in more complicated left-absorbing orders, though in a somewhat different form. In general, which orders $A$ are absorbed by a given countable left-absorbing order $X$ depends on whether $X$ has an initial segment (akin to a left endpoint) that contains no convex copy of itself, and also whether $X$ has a final segment (akin to a right endpoint) that contains no convex copy of itself. 

To illustrate this, here are some more examples. Suppose $X = \mathbb{Q}[\mathbb{Z}] = \mathbb{Q}\mathbb{Z}$. Then $AX = A(\mathbb{Q}\mathbb{Z}) \cong (A\mathbb{Q})\mathbb{Z} \cong \mathbb{Q}\mathbb{Z} = X$ for every countable order $A$. In this case, every non-empty initial and final segment of $X$ contains a convex copy of $X$, as the reader is invited to check. We will see that all countable left-absorbing orders $X$ with this property absorb the same countable orders that $\mathbb{Q}$ does (i.e. all of them).

Now let $X = \mathbb{Z} + \mathbb{Q}\mathbb{Z}$. We claim that $AX \cong X$ if and only if $A$ is countable and has a left endpoint. To see this, fix such an $A$. We have $AX = A(\mathbb{Z} + \mathbb{Q}\mathbb{Z}) \cong A(1+\mathbb{Q})\mathbb{Z} \cong (1+\mathbb{Q})\mathbb{Z} \cong X$, as desired. And if $A$ has no left endpoint, then $AX \cong A(1+\mathbb{Q})\mathbb{Z} \cong \mathbb{Q}\mathbb{Z}$. Since $X = \mathbb{Z}+ \mathbb{Q}\mathbb{Z}$ has an initial segment isomorphic to $\mathbb{Z}$ and $\mathbb{Q}\mathbb{Z}$ does not, we have $AX \not\cong X$ in this second case, which shows the claim. Notice that there is an initial segment of $X$ that does not contain a convex copy of $X$ (for example the initial copy of $\mathbb{Z}$), but every non-empty final segment of $X$ contains a convex copy of $X$. We will see that all countable left-absorbing orders with this property absorb the same orders as $1 + \mathbb{Q}$.

Symmetrically, if $X = \mathbb{Q}\mathbb{Z} + \mathbb{Z}$ we have $AX \cong X$ if and only if $A$ is countable and has a right endpoint. In this case, $X$ has a non-empty final segment containing no convex copy of $X$, but no such initial segment. All such countable left-absorbing orders absorb the same orders as $\mathbb{Q} + 1$. 

If $X = \mathbb{Z} + \mathbb{Q}\mathbb{Z} + \mathbb{Z}$, then similarly we have $AX \cong X$ if and only if $A \cong 1 + \mathbb{Q} + 1$. Observe that $X$ has both an initial and final segment embedding no convex copy of $X$, and that $X$ absorbs the same orders as $1 + \mathbb{Q} + 1$. 

But now there is a difference, since not \emph{all} such $X$ absorb the same orders as $1 + \mathbb{Q} + 1$. Consider $X = \mathbb{N} + \mathbb{Q}\mathbb{Z} + \mathbb{N}^*$. We check that $X$ is left-absorbing. Consider $2X = \mathbb{N} + \mathbb{Q}\mathbb{Z} + \mathbb{N}^* + \mathbb{N} + \mathbb{Q}\mathbb{Z} + \mathbb{N}^* \cong \mathbb{N} + \mathbb{Q}\mathbb{Z} + \mathbb{Z} + \mathbb{Q}\mathbb{Z} + \mathbb{N}^*$. Since $\mathbb{N}^* + \mathbb{N} \cong \mathbb{Z}$, the middle three terms are isomorphic to $(\mathbb{Q} + 1 + \mathbb{Q})\mathbb{Z}$. Since $\mathbb{Q} + 1 + \mathbb{Q}$ is countable, dense, and has no endpoints, we have $\mathbb{Q} + 1 + \mathbb{Q} \cong \mathbb{Q}$. Hence $2X \cong \mathbb{N} + \mathbb{Q}\mathbb{Z} + \mathbb{N}^* = X$. So $X$ absorbs $2$. Moreover $X$ has both an initial and final segment embedding no convex copy of itself, namely the initial $\mathbb{N}$ and final $\mathbb{N}^*$ in the sum $\mathbb{N} + \mathbb{Q}\mathbb{Z} + \mathbb{N}^*$. But the spectrum of orders $A$ absorbed by $X$ is different than for $1 + \mathbb{Q} + 1$. It can be checked that $X$ absorbs $A$ if and only if $A$ is countable, has both a left and right endpoint, and every point in $A$ that is not the left endpoint has a predecessor, and every point that is not the right endpoint has a successor. We will analyze absorption of this kind in Section 4. 

So far, all of our examples have been of left-absorbing orders. Are there countable self-similar orders that are not left-absorbing? Let $X = \mathbb{N} + \mathbb{Q}\mathbb{Z}$. Since $\mathbb{N}$ is a final segment of some (actually every) replacing order $I_q$ in the shuffle $\mathbb{Q}\mathbb{Z} = \mathbb{Q}[\mathbb{Z}]$, we have by the proposition above that $X$ is self-similar. But $X$ is not left-absorbing. For suppose that $A$ is an order with at least two points $a < b$. The initial copy of $\mathbb{N}$ in the sum $\mathbb{N} + \mathbb{Q}\mathbb{Z} = X$ has the property that it is the unique interval $I \subseteq X$ such that $I \cong \mathbb{N}$ and for every interval $I' \supseteq I$ strictly containing $I$ we have $I' \not\cong \mathbb{N}$. But in $AX$ there are at least two such intervals, since the initial copies of $\mathbb{N}$ in the intervals $I_a \cong X$ and $I_b \cong X$ both have this property. Hence $AX \not\cong X$. It follows that $X$ absorbs no non-trivial order $A$ on the left. One might notice that this example differs from the previous one in that the initial copy of $\mathbb{N}$ in $X$ is no longer complemented by a final copy of $\mathbb{N}^*$. This pre-empts placing copies of $X$ next to one another to get a middle copy of $\mathbb{N}^* + \mathbb{N} \cong \mathbb{Z}$, which can then be absorbed into the surrounding copies of $\mathbb{Q}\mathbb{Z}$.

On the other hand, if $X = \mathbb{N} + \mathbb{Q}[\mathbb{Z}, \mathbb{N}]$, then $X$ is left-absorbing. In fact, $AX \cong X$ for every countable order $A$ with a left endpoint. The reason is more or less the same as for $\mathbb{Z} + \mathbb{Q}\mathbb{Z}$, though the proof is slightly more cumbersome to write. We sketch it as follows. For such an $A$, consider the natural condensation of $AX = A(\mathbb{N} + \mathbb{Q}[\mathbb{Z}, \mathbb{N}])$ onto $A(1 + \mathbb{Q}) \cong 1 + \mathbb{Q}$ that condenses every copy of $\mathbb{N}$ and every copy of $\mathbb{Z}$ to a point. Then the set of points that are condensed images of $\mathbb{N}$ and the set of points that are condensed images of $\mathbb{Z}$ are both dense in $1 + \mathbb{Q}$, and the initial point is a condensed image of $\mathbb{N}$. But this also describes the natural condensation of $X = \mathbb{N} + \mathbb{Q}[\mathbb{Z}, \mathbb{N}]$ onto $1 + \mathbb{Q}$. It follows $AX \cong X$. 

\section{Characterizing the self-similar countable orders and left-absorbing countable orders}

We now prove our main results. Theorem \ref{thm1} characterizes the countable self-similar linear orders as exactly the orders that appear as non-negligible intervals in shuffles. The key step in the proof is to condense the intervals of a given self-similar order that are maximal with respect to not containing a convex copy of the order. Theorem \ref{thm2} identifies which countable self-similar orders are left-absorbing, and for each such order determines the orders it absorbs. 

\theoremstyle{definition}
\newtheorem{thm1}{Theorem}
\begin{thm1}\label{thm1} \,\ 
Suppose that $X$ is a countable linear order. Then $X$ is self-similar if and only if there is a shuffle $\mathbb{Q}[I_k]$ and a non-negligible interval $I \subseteq \mathbb{Q}[I_k]$ such that $X \cong I$. 
\end{thm1}

\begin{proof}
Since the backward implication was proved in the previous section, it suffices to prove the forward one. Suppose that $X$ is countable and self-similar. Define a relation $\sim$ on $X$ by the rule $x \sim y$ if the closed interval $[\{x, y\}]$ does not contain a convex copy of $X$.

We claim that $\sim$ is a condensation of $X$. We must show that $\sim$ is an equivalence relation whose equivalence classes are intervals. Reflexivity and symmetry of $\sim$ follow immediately from its definition. To prove transitivity, fix $x, y, z \in X$ and suppose $x \sim y \sim z$. The six possible orderings of the points $x, y, z$ are $x < y < z$, $x < z < y$, $y < x < z$, $y < z < x$, $z < x < y$, and $z < y < x$. For the middle four orderings we get $x \sim z$ immediately. For example, suppose $x < z < y$. Since $x \sim y$, there is no convex copy of $X$ in the interval $[\{x, y\}] = [x, y]$. But then there is no convex copy of $X$ in the smaller interval $[\{x, z\}] = [x, z]$ and we have $x \sim z$. A similar argument applies to the subsequent three orderings. 

Assume $x < y < z$. The argument for the case when $z < y < x$ is symmetric. If $x \not \sim z$, then there is an interval $I \subseteq [x, z]$ that is isomorphic to $X$. Since $X$ is self-similar, there are disjoint subintervals $I_0$ and $I_1$ of $I$ that are each isomorphic to $X$. Suppose without loss of generality that $I_0 < I_1$. We cannot have $I_0 \subseteq [x, y]$ since this contradicts $x \sim y$. But if $I_0 \not \subseteq [x, y]$, then $I_0 \cap [y, z] \neq \emptyset$. Since $I_1$ lies to the right of $I_0$ we must have $I_1 \subseteq [y, z]$, contradicting $y \sim z$. It follows $x \sim z$, as desired. 

It remains to show that the equivalence classes of $\sim$ are convex. But this is clear: if $x < y < z$ and $x \sim z$, then since there is no convex copy of $X$ in $[x, z]$, there is no such copy in the smaller interval $[x, y]$, and we have $x \sim y$. Thus $\sim$ is a condensation of $X$, as claimed. 

It follows from the definition of $\sim$ that for any equivalence class $c(x)$, if $I \subseteq c(x)$ is an interval that is bounded below by some $x_0 \in c(x)$ and above by some $x_1 \in c(x)$, then $I \not\cong X$. On the surface, this leaves open the possibility that an interval $I \subseteq c(x)$ that is unbounded to the right or left in $c(x)$ may be isomorphic to $X$. But this cannot happen: if there were such an $I$, then since $X$ is self-similar we could find subintervals $I_0 < I_1$ of $I$ that are also isomorphic to $X$, and then subintervals $I_{00} < I_{01}$ of $I_0$ that are isomorphic to $X$ as well. But then $I_{01}$ would be a bounded convex copy of $X$ in $c(x)$, which is impossible. Thus each condensation class $c(x)$ contains no convex copy of $X$. 

We next claim that $X / \sim$ is dense as a linear order. Note that by the previous paragraph $X / \sim$ cannot be a singleton, as then the unique condensation class $c(x)$ would be isomorphic to $X$. Suppose toward a contradiction there are classes $c(x), c(y) \in X / \sim$, with representatives $x, y \in X$, such that $c(y)$ is the successor of $c(x)$ in $X / \sim$. Then since $x \not \sim y$, the interval $[x, y]$ contains an interval $I$ that is isomorphic to $X$. Since $X$ is self-similar, $I$ contains convex subcopies $I_0 < I_1$ of $X$. It must be that either $I_0 \subseteq c(x)$ or $I_1 \subseteq c(y)$, a contradiction either way. Thus $X / \sim$ is dense, as claimed. 

Since $X / \sim$ is the condensation of a countable order, it is countable. Since it is dense, it is isomorphic to either $\mathbb{Q}$, $1 + \mathbb{Q}$, $\mathbb{Q} + 1$, or $1 + \mathbb{Q} + 1$. We will show that in each of these cases there is a shuffle $\mathbb{Q}[I_k]$ and orders $L$ and $R$ such that $X \cong L + \mathbb{Q}[I_k] + R$. In the first case we show that $L = R = \emptyset$, in the second, that $L$ is a final segment of some $I_k$ and $R = \emptyset$, in the third, that $L = \emptyset$ and $R$ is an initial segment of some $I_k$, and in the fourth, that $L$ is a final segment of some $I_{k_0}$ and $R$ is an initial segment of some $I_{k_1}$. By the proposition in the previous section, it will follow that in each case there is a non-negligible interval $I \subseteq \mathbb{Q}[I_k]$ such that $X \cong I$, and the theorem will be proved.   

Suppose first that $X / \sim$ is isomorphic to $\mathbb{Q}$. For simplicity, identify $X / \sim$ with $\mathbb{Q}$. For every $q \in \mathbb{Q}$, let $I_q$ denote $c^{-1}(q)$, the interval of points in $X$ that are condensed to $q$. Then $X$ is isomorphic to the replacement $\mathbb{Q}(I_q)$. 

We claim that $X$ is actually isomorphic to a shuffle $\mathbb{Q}[I_k]$. To prove this, it suffices to show that for every $q \in \mathbb{Q}$, the set of $p$ such that $I_p \cong I_q$ is dense in $\mathbb{Q}$. Then, if we identify orders in $\{I_q\}$ that are isomorphic, each $q$ determines a dense subset of $\mathbb{Q}$, namely $Q_q = \{p \in \mathbb{Q}: I_p = I_q\}$. Enumerating these subsets as $Q_k$, and selecting a representative $q_k \in Q_k$ for every $k$, we may take the collection $\{I_k\}$ to be $\{I_{q_k}\}$, and view the shuffle $\mathbb{Q}[I_k]$ as being constructed with respect to the partition $\mathbb{Q} = \bigcup_k Q_k$. 

Fix $q \in \mathbb{Q}$, and also fix $q_0, q_1 \in \mathbb{Q}$ with $q_0 < q_1$. We find $p$ in the interval $(q_0, q_1)$ with $I_p \cong I_q$. Fix $x_0 \in I_{q_0}$ and $x_1 \in I_{q_1}$. Since $x_0 \not\sim x_1$, there is an interval $J \subseteq [x_0, x_1]$ that is isomorphic to $X$. Since there is no leftmost condensation class in $X$, every nonempty initial segment of $X$ contains points from distinct condensation classes. It follows that every initial segment of $X$ contains a convex copy of $X$, and so every initial segment of $J$ also contains a convex copy of $X$. Thus $J \cap I_{q_0} = \emptyset$, since if not, $J \cap I_{q_0}$ would be an initial segment of $J$ containing no convex copy of $X$. Symmetrically, every non-empty final segment of $X$, and therefore every non-empty final segment of $J$, contains a convex copy of $X$, so that $J \cap I_{q_1} = \emptyset$ as well. Thus $J$ lies strictly between $I_{q_0}$ and $I_{q_1}$, that is, $I_{q_0} < J < I_{q_1}$.

Let $f: X \rightarrow J$ be an isomorphism. Consider the image $f[I_q]$ of the interval $I_q$. Choose $y \in f[I_q]$. Then $c(y) = I_p$ for some $p \in \mathbb{Q}$. Since $I_p \cap J \neq \emptyset$, we must actually have $I_p \subseteq J$. Otherwise, as $I_p$ is an interval, it would contain either an initial or final segment of $J$ and hence a convex copy of $X$. It follows then, since $I_{q_0} < J < I_{q_1}$, that we have $q_0 < p < q_1$. We claim that $f[I_q] = I_p$. For the forward containment, notice that if $y' \in f[I_q]$ then we must have $y' \sim y$. Otherwise $f[I_q]$, which contains $[\{y, y'\}]$, would contain a convex copy of $X$. This is impossible, as $f[I_q] \cong I_q$. For the reverse containment, notice that the same argument applies to the image $f^{-1}[I_p]$ under the inverse isomorphism $f^{-1}: J \rightarrow X$, giving $f^{-1}[I_p] \subseteq I_q$. Thus $f[I_q] = I_p$, and so $I_q \cong I_p$. Since $q_0$ and $q_1$ were arbitrary, the set of condensation classes $I_p$ that are isomorphic to $I_q$ is dense in $X / \sim$. Since $q$ was arbitrary, it follows that $X$ is isomorphic to a shuffle $\mathbb{Q}[I_k]$, as claimed. 

Now suppose $X / \sim$ is isomorphic to $1 + \mathbb{Q}$. Identify $X / \sim$ with $1 + \mathbb{Q}$, so that $X$ is isomorphic to a replacement $(1+\mathbb{Q})(I_q)$. Let $l$ denote the left endpoint of $1 + \mathbb{Q}$, so that $X \cong I_l + \mathbb{Q}(I_q)$. Observe that an initial segment $I$ of $X$ contains a convex copy of $X$ if and only if $I$ properly extends $I_l$, but every nonempty final segment of $X$ contains a convex copy of $X$. 

We claim that for some $q \in \mathbb{Q}$, $I_l$ is isomorphic to a final segment $L$ of $I_q$. Fix points $q_0 < q_1$ in $\mathbb{Q}$, and then fix $x_0, x_1 \in X$ from the corresponding condensation classes $I_{q_0}$ and $I_{q_1}$. Let $J \subseteq [x_0, x_1]$ be an interval isomorphic to $X$, and let $f: X \rightarrow J$ be an isomorphism. Since $f[I_l]$ is initial in $J$, there is $q \in [q_0, q_1)$ such that $f[I_l] \cap I_q \neq \emptyset$. We must have $f[I_l] \subseteq I_q$, since otherwise $f[I_l]$ would contain a convex copy of $X$. Consider the interval $L = J \cap I_q$. We have $f[I_l] \subseteq L$. We show the reverse containment. Since $I_q$ is the leftmost condensation class that $J$ intersects, $L$ is a final segment of $I_q$. And since $f^{-1}[L]$ intersects $I_l$, it must be contained in $I_l$. Otherwise it would contain a convex copy of $X$, and then so would $I_q$. Thus $L \subseteq f[I_l]$ and so $f[I_l] = L$, giving $I_l \cong L$ as claimed. 

We have $X \cong L + \mathbb{Q}(I_q)$. We show the righthand term is isomorphic to a shuffle $\mathbb{Q}[I_k]$. As before, it suffices to check that for any $q \in \mathbb{Q}$ there are densely many $p$ such that $I_p \cong I_q$. Fix $q$ in $\mathbb{Q}$, then fix $q_0 < q_1$ in $\mathbb{Q}$, and then $x_0 \in I_{q_0}$ and $x_1 \in I_{q_1}$. We find $p$ such that $q_0 < p < q_1$ and $I_p \cong I_q$. Let $J \subseteq [x_0, x_1]$ be an interval that is isomorphic to $X$, as witnessed by an isomorphism $f: X \rightarrow J$. It may be that $J \cap I_{q_0} \neq \emptyset$. But if this is so, then a similar argument to the one given in the previous paragraph shows that $J \cap I_{q_0} = f[I_l]$. Since $l < q$, we have $I_{q_0} < f[I_q]$. Since $J$ must lie completely to the left of $I_{q_1}$, as otherwise it would have a final segment containing no convex copy of $X$, we have also $f[I_q] < I_{q_1}$. Arguing as in the previous case, we have $f[I_q] \subseteq I_p$ for some unique $p \in (q_0, q_1)$, and then $f^{-1}[I_p] \subseteq I_q$ as well. Thus $I_p = f[I_q]$, and hence $I_p \cong I_q$. Since $q$ was arbitrary, it follows $X \cong L + \mathbb{Q}[I_k]$ as claimed. 

The arguments for the cases when $X / \sim$ is isomorphic to $\mathbb{Q} + 1$ and $1 + \mathbb{Q} + 1$ are similar, and we leave them out. For these, we get respectively that $X \cong \mathbb{Q}[I_k] + R$ and $X \cong L + \mathbb{Q}[I_k] + R$, where $L$ is a final segment of some $I_{k_0}$ and $R$ is an initial segment of some $I_{k_1}$. We are done.
\end{proof}

We turn now to the problem of characterizing the countable left-absorbing linear orders. Before we can state and prove our characterization, we need to address the issue of uniqueness in the representation of shuffles $\mathbb{Q}[I_k]$. 

We will think of two collections of orders $\{I_k\}$ and $\{J_l\}$ as being the same if there is a bijection $i$ between the sets of indices $\{k\}$ and $\{l\}$ such that $I_k \cong J_{i(k)}$ for all $k$. If there is no such a bijection, we think of the collections as being distinct. It can happen that for distinct collections of orders, the corresponding shuffles $\mathbb{Q}[I_k]$ and $\mathbb{Q}[J_l]$ are isomorphic. For example, consider the shuffle $\mathbb{Q}[1, 1 + \mathbb{Q}]$. Here, we have decomposed $\mathbb{Q}$ into two dense subsets $Q_1$ and $Q_2$, and kept the points in $Q_1$ as singletons while substituting every point in $Q_2$ with a copy of $1 + \mathbb{Q}$. The resulting order remains countable and without endpoints. Let us verify, in explicit detail, that it is also dense. View $\mathbb{Q}[1, 1 + \mathbb{Q}]$ as a replacement $\mathbb{Q}(I_q)$, and suppose $x < y$ in $\mathbb{Q}[1, 1 + \mathbb{Q}]$. It may be that $x$ and $y$ belong to the same replacing order $I_q$, in which case we must have $I_q = 1 + \mathbb{Q}$. But then there is a point between $x$ and $y$, since $1 + \mathbb{Q}$ is dense. The other possibility is that $x \in I_q$ and $y \in I_{q'}$ for some $q < q'$ in $\mathbb{Q}$. But then we can also find a point between them, specifically in $I_r$ for some $r \in \mathbb{Q}$ between $q$ and $q'$. Thus $\mathbb{Q}[1, 1 + \mathbb{Q}]$ is dense, so that $\mathbb{Q}[1, 1 + \mathbb{Q}] \cong \mathbb{Q}$. This shows that for the distinct collections $\{1, 1 + \mathbb{Q}\}$ and $\{1\}$, the corresponding shuffles $\mathbb{Q}[1, 1 + \mathbb{Q}]$ and $\mathbb{Q}[1]$ are isomorphic.

Here is another example. Let $X = \mathbb{Q}[\mathbb{N}, \mathbb{Z} + \mathbb{Q}[\mathbb{N}, \mathbb{Z}]]$. Again, we have decomposed $\mathbb{Q}$ into two dense subsets, but now replaced points in the first with $\mathbb{N}$ and points in the second with $\mathbb{Z} + \mathbb{Q}[\mathbb{N}, \mathbb{Z}]$. We claim that $X \cong \mathbb{Q}[\mathbb{N}, \mathbb{Z}]$. Once more we verify this in some detail, since going forward we will only sketch any similar arguments. Let $c: \mathbb{Q}[\mathbb{N}, \mathbb{Z} + \mathbb{Q}[\mathbb{N}, \mathbb{Z}]] \rightarrow \mathbb{Q}[1, 1 + \mathbb{Q}]$ be the natural condensation map of $X$ onto $\mathbb{Q}[1, 1 + \mathbb{Q}]$, in which each copy of $\mathbb{N}$ and each copy of $\mathbb{Z}$ is condensed to a point. We check that the set of points in $\mathbb{Q}[1, 1 + \mathbb{Q}]$ that are condensed images of $\mathbb{Z}$ is dense in $\mathbb{Q}[1, 1 + \mathbb{Q}]$. View the shuffle $\mathbb{Q}[1, 1 + \mathbb{Q}]$ as a replacement $\mathbb{Q}(I_q)$ and fix $x < y$ in $\mathbb{Q}[1, 1 + \mathbb{Q}]$. If $x, y \in I_q$ for some $q$, then it must be $I_q = 1 + \mathbb{Q}$. Since $I_q$ is a condensed copy of $\mathbb{Z} + \mathbb{Q}[\mathbb{N}, \mathbb{Z}]$, there is $z \in I_q$ that is a condensed copy of $\mathbb{Z}$ with $x < z < y$. And if $x \in I_q$ and $y \in I_{q'}$ for some $q < q'$, then there is an $r$ with $q < r < q'$ such that $I_r = 1 + \mathbb{Q}$. Such an $I_r$ contains a point that is a condensed copy of $\mathbb{Z}$. Similarly, the set of points in $\mathbb{Q}[1, 1 + \mathbb{Q}]$ that are condensed images of $\mathbb{N}$ is dense. Label these two dense subsets as $Q_{\mathbb{Z}}$ and $Q_{\mathbb{N}}$ respectively. If we fix an isomorphism  $f: \mathbb{Q}[1, 1 + \mathbb{Q}] \rightarrow \mathbb{Q}$, we also get a partition of $\mathbb{Q}$ into the two dense subsets $R_{\mathbb{Z}} = f[Q_{\mathbb{Z}}]$ and $R_{\mathbb{N}} = f[Q_{\mathbb{N}}]$. If we ``uncondense" $\mathbb{Q}[1, 1 + \mathbb{Q}]$, that is, replace points in $Q_{\mathbb{Z}}$ with $\mathbb{Z}$ and points in $Q_{\mathbb{N}}$ with $\mathbb{N}$, and likewise replace points in $R_{\mathbb{Z}}$ with $\mathbb{Z}$ and points in $R_{\mathbb{N}}$ with $\mathbb{N}$, we can lift $f$ to get an isomorphism $F: \mathbb{Q}[\mathbb{N}, \mathbb{Z} + \mathbb{Q}[\mathbb{N}, \mathbb{Z}]] \rightarrow \mathbb{Q}[\mathbb{N}, \mathbb{Z}]$ by defining $F(q, n) = (f(q), n)$. Thus $\mathbb{Q}[\mathbb{N}, \mathbb{Z} + \mathbb{Q}[\mathbb{N}, \mathbb{Z}]] \cong \mathbb{Q}[\mathbb{N}, \mathbb{Z}]$, as claimed.

Fortunately, representations of shuffles become unique once we insist on a smallness property for the orders being shuffled. Define a countable collection $\{I_k\}$ of countable orders to be \emph{minimal} if none of the $I_k$ contains a convex copy of the shuffle $\mathbb{Q}[I_k]$. We say that a representation $\mathbb{Q}[I_k]$ of a shuffle is minimal if the corresponding collection $\{I_k\}$ is minimal.

Every shuffle has a minimal representation, and this representation is unique. To see this, suppose that $X$ is a shuffle, and consider the condensation $\sim$ employed in the proof of Theorem \ref{thm1}. Since every initial and final segment of $X$ contains a convex copy of $X$, it follows from the proof that $X / \sim$ is isomorphic to $\mathbb{Q}$, so that $X \cong \mathbb{Q}(I_q)$. As in the proof of Theorem \ref{thm1}, if we identify any isomorphic orders in the set of condensation classes $\{I_q\}$, we get a representation $X = \mathbb{Q}[I_k] = \mathbb{Q}[I_{q_k}]$ of $X$ as a shuffle. Since the condensation classes $I_q$ do not contain a convex copy of $X$, this representation is minimal. 

To see the uniqueness of the representation, suppose that $\mathbb{Q}[J_l]$ is another representation of $X$ with respect to some minimal collection $\{J_l\}$. The copies of $J_l$ appearing in $\mathbb{Q}[J_l]$ are the maximal intervals of $\mathbb{Q}[J_l]$ that do not (by minimality) contain a convex copy of $\mathbb{Q}[J_l]$. That is, if $I \subseteq \mathbb{Q}[J_l]$ is an interval that contains no convex copy of $\mathbb{Q}[J_l]$, but every interval $J \supseteq I$ strictly containing $I$ does contain such a copy, then $I \cong J_l$ for some $l$. But such intervals are exactly the condensation classes $I_q$ of the condensation $\sim$ applied to $X = \mathbb{Q}[J_l]$, so that this condensation identifies the orders in $\{J_l\}$ (up to isomorphism). It follows that the representations $\mathbb{Q}[J_l]$ and $\mathbb{Q}[I_k]$ are the same.

Notice that, more generally, the representations of self-similar orders  $X$ as $L + \mathbb{Q}[I_k] + R$ yielded by the proof of Theorem \ref{thm1} are minimal, in the sense that $L$ and $R$ contain no convex copy of $X$, and hence also no convex copy of $\mathbb{Q}[I_k]$. 

Now we can work toward our classification of the countable left-absorbing orders. Fix a minimal collection of countable orders $\{I_k\}$ and two countable orders $L$ and $R$, neither embedding a convex copy of $\mathbb{Q}[I_k]$. Let $X = L + \mathbb{Q}[I_k] + R$. By Theorem \ref{thm1}, such an order is self-similar if and only if $L$ is a final segment of some $I_k$, or empty, and $R$ is an initial segment of some $I_k$, or empty. Our goal is to determine when such an order is left-absorbing, and when it is, determine which orders it absorbs. We will show that such an $X$ is left-absorbing if and only if each of the segments $L$ and $R$ is either empty or isomorphic to one of the $I_k$, or their sum $R + L$ is isomorphic to one of the $I_k$.

We work through the possible cases for $L$ and $R$. To begin, we emphasize that if $I \subseteq X$ is any interval, then either $I$ is a subinterval of $L$ or $R$ or one of the $I_k$, or $I$ contains a convex copy of $\mathbb{Q}[I_k]$, and these possibilities are mutually exclusive. 

If $L = R = \emptyset$, then $X = \mathbb{Q}[I_k]$ is a shuffle. We have already showed in this case that $X$ is left-absorbing, and indeed that $AX \cong X$ for every non-empty countable linear order $A$.

Suppose $L \neq \emptyset$ but $R = \emptyset$. There are two subcases. 
\begin{itemize}
    \item[i.)] There is an index $k_0$ such that $L \cong I_{k_0}$. 
    \,\ \\
    
    Then $X \cong I_{k_0} + \mathbb{Q}[I_k]$. We write $X = (1+\mathbb{Q})[I_k]$. We claim that for a countable order $A$, we have $AX \cong X$ if and only if $A$ has a left endpoint. Fix a countable order $A$ with a left endpoint. We explicitly describe an isomorphism $F: AX \rightarrow X$. Let $(1 + \mathbb{Q}) = \bigcup_k Q_k$ be the partition of $1 + \mathbb{Q}$ according to the replacement by the orders $\{I_k\}$, so that for each fixed $k$, each $q \in Q_k$ is replaced by $I_k$. Note that the left endpoint of $1 + \mathbb{Q}$ belongs to $Q_{k_0}$. Let $A(1 + \mathbb{Q}) = \bigcup_k R_k$ be the corresponding partition of $A(1 + \mathbb{Q})$, and note that the left endpoint of $A(1 + \mathbb{Q})$ is in $R_{k_0}$. By Skolem's theorem there is an isomorphism $f: A(1 + \mathbb{Q})\rightarrow 1 + \mathbb{Q}$ such that $f[Q_k] = R_k$ for all $k$. View $AX$ as $A(1 + \mathbb{Q})[I_k]$ and $X$ as $(1+\mathbb{Q})[I_k]$, and define $F: AX \rightarrow X$ by $F(a, q, i) = (f(a, q), i)$. Then $F$ is well-defined by our choice of $f$, and readily seen to be an isomorphism of $AX$ with $X$, as desired.

    On the other hand, if $A$ has no left endpoint, then every non-empty initial segment of $AX$ contains a convex copy of $X$. It follows there is no initial segment of $AX$ that is isomorphic to $L$, and thus $AX \not\cong X$.
    \,\ \\

    \item[ii.)] There is no such $k_0$. That is, for every $k$ we have $L \not\cong I_k$. 
    \,\ \\
    
    We show that $X$ is not left-absorbing. Fix an order $A$ with at least two points $a < b$. View  the product $AX$ as a replacement in which every point is replaced by $X$, and consider the interval $I_b \subseteq AX$. Since $I_b$ is a convex copy of $X$, it contains an initial convex copy of $L$ that we denote $L_b$. Suppose that $J \supseteq L_b$ is an interval that strictly contains $L_b$. We claim that $J$ contains a convex copy of $\mathbb{Q}[I_k]$. If $J$ extends $L_b$ to the right, this is immediate, since then $J$ contains an initial segment of the copy of $\mathbb{Q}[I_k]$ adjacent to $L_b$ in $I_b$, and hence $J$ contains a convex copy of $\mathbb{Q}[I_k]$. So suppose $J$ only extends $L_b$ to the left. Either $b$ has a predecessor $b' \in A$, or it does not. If there is such a $b'$, then $I_{b'}$ immediately precedes $I_b$ in $AX$, and $J$ contains a final segment of the copy of $\mathbb{Q}[I_k]$ in $I_{b'} \cong L + \mathbb{Q}[I_k]$. But $J$ contains a convex copy of $\mathbb{Q}[I_k]$, since every final segment of $\mathbb{Q}[I_k]$ contains a convex copy of itself. And if there is no such $b'$, then $J$ in fact contains $I_c$ for infinitely many $c < b$ in $A$, and therefore many convex copies of $\mathbb{Q}[I_k]$. Thus $J$ always contains a convex copy of $\mathbb{Q}[I_k]$, as claimed. But then $L_b$ is a non-initial interval in $AX$ that contains no convex copy of $X$, and all of whose strict superintervals do contain such a copy. The only non-initial intervals in $X = L + \mathbb{Q}[I_k]$ that have this property are isomorphic to one of the $I_k$. But $L_b$ is not isomorphic to any of the $I_k$. It follows $AX \not\cong X$. 
\end{itemize}
\,\

If instead $L = \emptyset$ and $R \neq \emptyset$, then a symmetric argument gives that $X$ is left-absorbing if and only if $R \cong I_{k_0}$ for some $k_0$. In this case, $X$ absorbs $A$ if and only if $A$ is countable and has a right endpoint. 

Finally, suppose that both $L$ and $R$ are nonempty. This is the most elaborate case. To analyze it, it will be helpful to recall some basic facts about the so-called \emph{finite condensation} $\sim_{Fin}$. For more on the finite condensation, see \cite[Ch. 4]{Rosenstein}. Given a linear order $M$ and $x, y \in M$, define $x \sim_{Fin} y$ if the interval $[\{x, y\}]$ is finite. It is easily verified that $\sim_{Fin}$ is a condensation. Given $x \in M$, it need not be true that the condensation class $c_{Fin}(x)$ is finite. However, what is true is that $c_{Fin}(x)$ is either finite, or isomorphic to one of $\mathbb{N}, \mathbb{N}^*$, or $\mathbb{Z}$. Suppose that we have condensation classes $c_{Fin}(x)$ and $c_{Fin}(y)$, and $c_{Fin}(y)$ is the successor of $c_{Fin}(x)$ in $M / \sim_{Fin}$. Then if $c_{Fin}(x)$ has a right endpoint (i.e. $c_{Fin}(x)$ is finite or isomorphic to $\mathbb{N}^*$), it must be that $c_{Fin}(y)$ does not have a left endpoint (i.e. $c_{Fin}(y)$ is isomorphic to either $\mathbb{N}^*$ or $\mathbb{Z}$). Otherwise this left endpoint would be related by $\sim_{Fin}$ to the right endpoint of $c_{Fin}(x)$, contradicting that these points come from distinct condensation classes. Symmetrically, if $c_{Fin}(y)$ has a left endpoint, it must be that $c_{Fin}(x)$ does not have a right one. It follows that, in general, if a given class $c_{Fin}(x)$ has any endpoints, these endpoints are limit points in $M$.

Returning to our final case with both $L$ and $R$ nonempty, there are essentially two ways that $X$ can be left-absorbing. The first, that mirrors the previous cases, is that there are indices $k_0$ and $k_1$ such that $L$ is isomorphic to $I_{k_0}$ and $R$ is isomorphic to $I_{k_1}$. The second, more novel way is that there is an index $k$ such that $L$ is isomorphic to a final segment of $I_k$ and $R$ is isomorphic to the corresponding initial segment, that is, $R + L \cong I_k$. These conditions are not exclusive. Indeed in the second case it may be that neither, one, or both of $R$ and $L$ are isomorphic to one of the $I_k$ and we still get left absorption. Which orders $A$ are absorbed by $X$ depends on which combination of these conditions holds.

Suppose first that $R + L \not\cong I_k$ for any index $k$. There are two subcases.

\begin{itemize}
    \item[i.)] There exist indices $k_0$ and $k_1$ such that $L \cong I_{k_0}$ and $R \cong I_{k_1}$. 
    \,\ \\
    
    We claim that $AX \cong X$ if and only if $A \cong 1 + \mathbb{Q} + 1$, or $A \cong 1$. If $A \cong 1 + \mathbb{Q} + 1$, view $AX$ as $(1 + \mathbb{Q} + 1)(I_{k_0} + \mathbb{Q}[I_k] + I_{k_1})$. Condense $AX$ onto $(1 + \mathbb{Q} + 1)(1 + \mathbb{Q} + 1)$ by condensing every $I_k$ to a point. View $(1 + \mathbb{Q} + 1)(1 + \mathbb{Q} + 1)$ as $(1 + \mathbb{Q} + 1)$. It is not hard to check that for each fixed index $k$, set of points in $1 + \mathbb{Q} + 1$ that are condensed images of $I_k$ is dense in $(1 + \mathbb{Q} + 1)$. Moreover, the left endpoint of $(1 + \mathbb{Q} + 1)$ is a condensed copy of $I_{k_0}$ and right endpoint is a condensed copy of $I_{k_1}$. It follows that $AX \cong I_{k_0} + \mathbb{Q}[I_k] + I_{k_1} = X$. 

    Conversely, suppose $A \not\cong 1 + \mathbb{Q} + 1$ and $A$ has at least two points. If $A$ is uncountable, then certainly $AX \not \cong X$. If $A$ has no left endpoint, then every initial segment of $AX$ contains a convex copy of $\mathbb{Q}[I_k]$, so that $AX \not\cong X$. Symmetrically, if $A$ has no right endpoint we have $AX \not\cong X$. Finally, suppose that $A$ is not dense. Fix $a, b \in A$ with $b$ the successor of $a$. View $AX$ as a replacement. The convex copies $I_a$ and $I_b$ of $X$ are adjacent in $AX$. Since $I_a$ contains a final segment $R_a$ that isomorphic to $R$ and $I_b$ contains an initial segment $L_b$ that is isomorphic to $L$, the interval $R_a + L_b$ is isomorphic to $R + L$. Since neither $R$ nor $L$ contains a convex copy of $\mathbb{Q}[I_k]$, neither does $R_a + L_b$, by the self-similarity of $\mathbb{Q}[I_k]$. It is not hard to see however that every interval that strictly contains $R_a + L_b$ contains a copy of $\mathbb{Q}[I_k]$. The only intervals in $X$ that are neither initial nor final segments, and that are maximal with respect to not containing a convex copy of $X$, are the intervals $I_k$. But by hypothesis $R_a + L_b$ is not isomorphic to $I_k$ for any $k$. Since $R_a + L_b$ is neither initial nor final in $AX$, it follows $AX \not\cong X$ as claimed. 
    \,\ \\

    \item[ii.)] At least one of $R, L$ is not isomorphic to any $I_k$. Suppose it is $L$, without loss of generality. 
    \,\ \\
    
    We show that $X$ is not left-absorbing. Suppose that $A$ is a countable order with at least two points. A similar argument to the one given in (i.) above shows that if $A$ is missing either a left or right endpoint, or if $A$ is not dense, then $AX \not\cong X$. It remains only to check that $AX \not \cong X$ when $A = 1 + \mathbb{Q} + 1$. Fix any $q \in \mathbb{Q}$ and consider the convex copy $I_q$ of $X$ in $AX$. This copy contains an initial segment $L_q$ that is isomorphic to $L$. Any interval extending $L_q$ to the right intersects the copy of $\mathbb{Q}[I_k]$ in $I_q \cong L_q + \mathbb{Q}[I_k] + R_q$ and hence contains a convex copy of $\mathbb{Q}[I_k]$. And since $q$ is a limit point (from the left) of $A = 1 + \mathbb{Q} + 1$, any interval properly extending $L_q$ to the left contains infinitely many of the $I_r$ for $r < q$, and thus infinitely many convex copies of $\mathbb{Q}[I_k]$. Thus $AX$ contains an interval that is neither final nor initial, that is not isomorphic to any $I_k$, and is maximal with respect to not containing a convex copy of $\mathbb{Q}[I_k]$. There is no such interval in $X$, so that $AX \not\cong X$.
\end{itemize}

Now suppose there is $k = k_0$ such that $R + L = I_{k_0}$. We work through four subcases.

\begin{itemize}
    \item[i.)] Neither $R$ nor $L$ is isomorphic to any of the orders $I_k$. 
    \,\ \\
    
    We claim that that we have $AX \cong X$ if and only if $A$ is countable and has a left and right endpoint, and every point in $A$ that is not the left endpoint has a successor, and every point that is not the right endpoint has a predecessor. 
    
    Suppose first that $A$ has these properties. It will be useful to get a more specific description of $A$. Consider the finite condensation $A / \sim_{Fin}$. There are two possibilities. The first is that $A / \sim_{Fin}$ consists of a single condensation class. In this case, it must be that this condensation class contains both of the endpoints of $A$, and hence must be finite, since the only possible $\sim_{Fin}$-classes with two endpoints are the finite ones. Thus $A$ is isomorphic to a finite order $n$. 

    The second possibility is that $A / \sim_{Fin}$ has cardinality at least two. Then the hypotheses on $A$, along with the facts about the possible forms of $\sim_{Fin}$ condensation classes recalled above, imply that the condensation class containing the left endpoint of $A$ is isomorphic to $\mathbb{N}$, the class containing the right endpoint is isomorphic to $\mathbb{N}^*$, and every other condensation class is isomorphic to $\mathbb{Z}$. That is, we have $A \cong \mathbb{N} + B\mathbb{Z} + \mathbb{N}^*$ for some countable order $B$. It is possible that $B$ is empty, in which case $A \cong \mathbb{N} + \mathbb{N}^*$.

    In the first case, when $A \cong n$, we have
    \[
    \begin{array}{l l l}
    AX & \cong & nX \\
    & \cong & X + X + \ldots + X \\
    & = & L + \mathbb{Q}[I_k] + R + L + \mathbb{Q}[I_k] + R + \ldots + L + \mathbb{Q}[I_k] + R \\
    & \cong & L + \mathbb{Q}[I_k] + I_{k_0} + \mathbb{Q}[I_k] + I_{k_0} + \ldots + I_{k_0} + \mathbb{Q}[I_k] + R.
    \end{array}
    \]

    If we condense the intervals $I_k$ in this order (leaving $L$ and $R$ uncondensed), we get a sum of the form $L + \mathbb{Q} + 1 + \mathbb{Q} + 1 + \ldots + 1 + \mathbb{Q} + R$, which is isomorphic to  $L + \mathbb{Q} + R$. For each fixed $k$, the points in the central copy of $\mathbb{Q}$ that are condensed images of $I_k$ are dense in $\mathbb{Q}$. It follows that the original uncondensed order $AX \cong nX$ is isomorphic to $L + \mathbb{Q}[I_k] + R \cong X$, as desired.

    In the second case we have $AX \cong (\mathbb{N} + B\mathbb{Z} + \mathbb{N}^*)X \cong \mathbb{N}X + B\mathbb{Z}X + \mathbb{N}^*X$. The lefthand term can be expanded as follows:
    \[
    \begin{array}{l l l}
    \mathbb{N}X & \cong & X + X + \ldots \\
    & = & L + \mathbb{Q}[I_k] + R + L + \mathbb{Q}[I_k] + R + \ldots \\
    & \cong & L + \mathbb{Q}[I_k] + I_{k_0} + \mathbb{Q}[I_k] + I_{k_0} + \ldots 
    \end{array}
    \]
    If we condense the intervals $I_k$ in this sum, we get a sum of the form $L + \mathbb{Q} + 1 + \mathbb{Q} + 1 + \ldots$ which is isomorphic to $L + \mathbb{Q}$. Since for each $k$, the points in $\mathbb{Q}$ that are condensed images of $I_k$ are dense in $\mathbb{Q}$, we have that $\mathbb{N}X$ is isomorphic to $L + \mathbb{Q}[I_k]$.

    By similar arguments we have that $\mathbb{N}^*X \cong \mathbb{Q}[I_k] + R$ and $\mathbb{Z}X \cong \mathbb{Q}[I_k]$. Thus, in the case when $B \neq \emptyset$, we have
    \[
    \begin{array}{l l l}
    AX & \cong & \mathbb{N}X + B\mathbb{Z}X + \mathbb{N}^*X \\
    & \cong & L + \mathbb{Q}[I_k] + B\mathbb{Q}[I_k] + \mathbb{Q}[I_k] + R \\
    & \cong & L + \mathbb{Q}[I_k] + \mathbb{Q}[I_k] + \mathbb{Q}[I_k] + R \\
    & \cong & L + \mathbb{Q}[I_k] + R \\
    & \cong & X 
    \end{array}
    \]
    as claimed. The case when $B = \emptyset$ is similar. 

    Conversely, suppose that $A$ has at least two points and does not satisfy the hypotheses above. We show that $AX \not\cong X$. If $A$ is missing either a left or right endpoint, then $AX$ will either have no initial segment isomorphic to $L$, or no final segment isomorphic to $R$, and hence $AX \not \cong X$. So suppose $A$ has both endpoints. Then either there is some $a \in A$, different from the left endpoint, that has no successor, or some $a' \in A$, different from the right endpoint, that has no predecessor. Given such an $a$, consider the corresponding convex copy $I_a$ of $X$ in $AX$. Then $I_a$ has an initial segment $L_a$ that is isomorphic to $L$. Note that $L_a$ is not initial or final in $X$. Since $a$ has no predecessor in $X$, every interval strictly containing $L_a$ contains a convex copy of $X$. But $L_a$ contains no convex copy of $X$. The only non-initial and non-final intervals with this property in $X$ are the $I_k$, and $L_a$ is not isomorphic to any such interval by hypothesis. Thus $AX \not\cong X$. The argument is symmetric given such an $a'$. Thus $AX \not\cong X$. 
    \,\ \\

    \item[ii.)] There is an index $k_1$ such that $L \cong I_{k_1}$, but there is no index $k$ such that $R \cong I_k$. 
    \,\ \\
    
    We show that $AX \cong X$ if and only if $A$ is countable, has a left and right endpoint, and every point other than the right endpoint has a successor. Or equivalently, by way of the finite condensation, we have $AX \cong X$ if and only if $A$ is finite, or for some countable order $B$ we have $A \cong \mathbb{N} + B(I_b) + M$, where for every $b \in B$ either $I_b \cong \mathbb{N}$ or $I_b \cong \mathbb{Z}$, and either $M \cong n$ for some finite $n$, or $M \cong \mathbb{N}^*$. 

    If $A$ is finite, then the same argument given in the previous case shows that $AX \cong X$. So suppose $A \cong \mathbb{N} + B(I_b) + M$. Then $AX \cong \mathbb{N}X + B(I_b)X + MX$. As before, we have that $\mathbb{N}X \cong L + \mathbb{Q}[I_k]$. If $M \cong \mathbb{N}^*$, then $MX \cong \mathbb{Q}[I_k] + R$ as above, and if $M \cong n$ for some finite order $n$, then $MX \cong L + \mathbb{Q}[I_k] + R$. For the middle term, distributing the $X$ over the replacement we get $B(I_b)X \cong B(I_bX)$. Each term $I_bX$ is isomorphic to either $L + \mathbb{Q}[I_k]$ or $\mathbb{Q}[I_k]$, depending on whether $I_b \cong \mathbb{N}$ or $I_b \cong \mathbb{Z}$. Putting this together, we have $AX \cong L + \mathbb{Q}[I_k] + B(J_b) + L' + \mathbb{Q}[I_k] + R$, where for every $b \in B$, the replacing order $J_b$ is either isomorphic to $L + \mathbb{Q}[I_k]$ or $\mathbb{Q}[I_k]$, and $L'$ is either empty or equal to $L$. If we condense the intervals $I_k$ appearing in this sum (including the segments $L \cong I_{k_1}$), we get an order of the form $1 + \mathbb{Q} + B(K_b) + p + \mathbb{Q} + R$, where for each $b \in B$, $K_b$ is either isomorphic to $1 + \mathbb{Q}$ or $\mathbb{Q}$, and $p$ is either empty or $1$. Any replacement in which the replacing orders are either $\mathbb{Q}$ or $1+\mathbb{Q}$ is dense as a linear order and has no right endpoint. It follows that $B(K_b)$ is isomorphic to one of $1 + \mathbb{Q}$ and $\mathbb{Q}$. Thus our condensation of $AX$ is isomorphic to $(1 + \mathbb{Q}) + (p_0 + \mathbb{Q}) + (p_1 + \mathbb{Q} + R)$, where $p_0$ is either empty or $1$, and likewise for $p_1$. No matter the values of $p_0$ and $p_1$, this order is isomorphic to $1 + \mathbb{Q} + R$. Here, the leading $1$ here is a condensed image of $I_{k_1} \cong L$, and it is not hard to see that for each fixed $k$, the points in $\mathbb{Q}$ that are condensed images of $I_k$ are dense in $\mathbb{Q}$. It follows $AX \cong L + \mathbb{Q}[I_k] + R = X$, as desired. We implicitly assumed $B \neq \emptyset$. The case when $B = \emptyset$ is similar.

    Now suppose that $A$ does not satisfy the hypotheses above. If $A$ is missing a left or right endpoint, then $AX \not\cong X$, since either every initial segment of $AX$ contains a convex copy of $X$, or every final segment does. If $A$ has both endpoints, there must be a point $a \in A$, distinct from its right endpoint, that has no successor. But then the convex copy $I_a$ of $X$ in the product $AX$ has a final segment $R_a$ that is not isomorphic to any $I_k$ and is not initial or final in $A$, and is maximal with respect to not containing a convex copy of $X$. Since $X$ has no such interval, we have $AX \not\cong X$.
    \,\ \\

    \item[iii.)] There is an index $k_2$ such that $R \cong I_{k_2}$, but there is no index $k$ such that $L \cong I_k$.
    \,\ \\
    
    By a symmetric argument to the previous case, we have that $AX \cong X$ if and only if $A$ is countable, has both a left and right endpoint, and every point that is not the left endpoint has a predecessor.
    \,\ \\

    \item[iv.)] There are indices $k_1$ and $k_2$ such that $L \cong I_{k_1}$ and $R \cong I_{k_2}$. 
    \,\ \\
    
    We claim that $AX \cong X$ if and only if $A$ is countable and has both a left and right endpoint. If $A$ is missing either endpoint, then as before we have $AX \not\cong X$. So suppose that $A$ is countable and has an endpoint on each side, that we label $l$ and $r$. Consider the finite condensation $A / \sim_{Fin}$ of $A$. Let $B = A / \sim_{Fin}$, so that $A \cong B(I_b)$, where each $\sim_{Fin}$ condensation class $I_b$ is isomorphic to some finite order $n$, or $\mathbb{N}$, or $\mathbb{N}^*$, or $\mathbb{Z}$. Since $A$ has a left endpoint, $B$ must have a left endpoint also, corresponding to the condensation class that contains the left endpoint of $A$. We denote this class as $I_l$. Correspondingly there is a rightmost condensation class in $B(I_b)$ that we label $I_r$. If $I_l = I_r$ then there is a single, finite condensation class, and we have $AX \cong X$ by the same proof as before. So we assume that $I_l \neq I_r$. By the properties of the finite condensation mentioned above, if there are points $b, b' \in B$ with $b'$ the successor of $b$, then if $I_b$ has a right endpoint, it must be that $I_{b'}$ has no left endpoint, and likewise if $I_{b'}$ has a left endpoint, then $I_b$ has no right one. 
    
    We have that $AX \cong B(I_b)X \cong B(I_bX)$. For a fixed $b \in B$, the order $I_bX$ is isomorphic to either $L + \mathbb{Q}[I_k] + R$, $L + \mathbb{Q}[I_k]$, $\mathbb{Q}[I_k] + R$, or $\mathbb{Q}[I_k]$, depending on whether $I_b$ is isomorphic to a finite order, or $\mathbb{N}$, or $\mathbb{N}^*$, or $\mathbb{Z}$, respectively. If we condense each of the intervals $I_k$ in the terms $I_bX$, including the initial copies of $L \cong I_{k_1}$ and final copies of $R \cong I_{k_2}$ in the terms that include them, we obtain from $B(I_bX)$ a condensed order $B(K_b)$, where each $K_b$ is either $1 + \mathbb{Q} + 1$, or $1 + \mathbb{Q}$, or $\mathbb{Q} + 1$, or $\mathbb{Q}$, respectively. Note that the condensed term $K_b$ has a left endpoint if and only if the original finite condensation class $I_b$ has a left endpoint, and likewise for right endpoints. 

    We claim that the condensed order $B(K_b)$ is isomorphic to $1 + \mathbb{Q} + 1$. Since $B(I_bX)$ has a leftmost term $I_lX$, and $I_l$ is the finite condensation class of $A$ that includes its left endpoint, it must be that $I_l$ is finite or isomorphic to $\mathbb{N}$. Thus $I_lX$ is either $L + \mathbb{Q}[I_k] + R$ or $L + \mathbb{Q}[I_k]$, so that its condensation $K_l$ is either $1 + \mathbb{Q} + 1$ or $1 + \mathbb{Q}$. In any case, $K_l$ has a left endpoint, which is also the left endpoint of $B(K_b)$. Symmetrically, $B(K_b)$ has a right endpoint. It remains only to check that $B(K_b)$ is dense. Fix $x < y$ in $B(K_b)$. Since each individual term $K_b$ is dense, the only possible way $y$ could be a successor of $x$ is if $x$ is the right endpoint of some $K_b$ and $y$ is the left endpoint of some $K_c$, where $c$ is the successor of $b$ in $B$. But by our observation in the previous paragraph, this would mean that in $B = A / \sim_{Fin}$ the finite condensation class $I_b$ has a right endpoint and is succeeded by the condensation class $I_c$ with a left endpoint, which is impossible. 

    Thus $B(K_b)$ is dense, and therefore isomorphic to $1 + \mathbb{Q} + 1$. We view it as a copy of $1 + \mathbb{Q} + 1$. It is not hard to verify that for each fixed $k$, the set of points in this copy of $1 + \mathbb{Q} + 1$ that are condensed images of $I_k$ is dense. Since the first point is a condensed image of $L$ and the last a condensed image of $R$, we have that the uncondensed order $AX \cong B(I_bX)$ is isomorphic to $L + \mathbb{Q}[I_k] + R \cong X$, as desired. 
\end{itemize}

This finishes our classification and proves the following theorem.

\theoremstyle{definition}
\newtheorem{thm2}[thm1]{Theorem}
\begin{thm2}\label{thm2} \,\ 
Suppose that $X$ is a countable linear order. Then $X$ is left-absorbing if and only if there is a countable, minimal collection $\{I_k\}$ of non-empty countable linear orders, and two countable orders $L$ and $R$, such that $X = L + \mathbb{Q}[I_k] + R$ and exactly one of the following holds:
\begin{itemize}
    \item[1.] $L = R = \emptyset$,
    \item[2.] There is an index $k$ such that $L \cong I_k$, and $R = \emptyset$,
    \item[3.] There is an index $k$ such that $R \cong I_k$, and $L = \emptyset$,
    \item[4.] There are indices $k_1$ and $k_2$ such that $L \cong I_{k_1}$ and $R \cong I_{k_2}$, but there is no index $k$ such that $R + L \cong I_k$,
    \item[5.] There is an index $k_0$ such that $R + L \cong I_{k_0}$, but no index $k$ such that $L \cong I_k$ and no index $k$ such that $R \cong I_k$,
    \item[6.] There is an index $k_0$ such that $R + L \cong I_{k_0}$ and an index $k_1$ such that $L \cong I_{k_1}$, but no index $k$ such that $R \cong I_k$,
    \item[7.] There is an index $k_0$ such that $R + L \cong I_{k_0}$ and an index $k_1$ such that $R \cong I_{k_1}$, but no index $k$ such that $L \cong I_k$,
    \item[8.] There are indices $k_0, k_1, k_2$ such that $R + L \cong I_{k_0}$, $L \cong I_{k_1}$, and $R \cong I_{k_2}$.
\end{itemize}

Suppose that $X$ is left-absorbing, and $A$ is a countable order. Whether $AX \cong X$ is determined by which of the conditions above that $X$ satisfies, as follows.

\begin{itemize}
    \item[i.] if (1), then $AX \cong X$ for every countable order $A$,
    \item[ii.] if (2), then $AX \cong X$ if and only if $A$ has a left endpoint,
    \item[iii.] if (3), then $AX \cong X$ if and only if $A$ has a right endpoint,
    \item[iv.] if (4), then $AX \cong X$ if and only if $A \cong 1 + \mathbb{Q} + 1$ or $A \cong 1$,
    \item[v.] if (5), then $AX \cong X$ if and only if $A$ has a left and right endpoint, and every point in $A$ other than the right endpoint has a successor, and every point in $A$ other than the left endpoint has a predecessor,
    \item[vi.] if (6), then $AX \cong X$ if and only if $A$ has a left and right endpoint, and every point in $A$ other than the right endpoint has a successor,
    \item[vii.] if (7), then $AX \cong X$ if and only if $A$ has a left and right endpoint, and every point in $A$ other than the left endpoint has a predecessor,
    \item[viii.] if (8), then $AX \cong X$ if and only if $A$ has a left and right endpoint. 
\end{itemize} \qed
\end{thm2}

We conclude with a corollary that answers the following question left open in \cite{Ervin}: which countable linear orders $X$ with both a left and right endpoint are isomorphic to their lexicographic squares?

\theoremstyle{definition}
\newtheorem{cor3}[thm1]{Corollary}
\begin{cor3}\label{cor3} \,\ 
Suppose that $X$ is a countable linear order with both a left and right endpoint. Then $X \cong X^2$ if and only if there is a countable, minimal collection $\{I_k\}$ of countable linear orders, a nonempty countable order $L$ with a left endpoint, and a non-empty countable order $R$ with a right endpoint such that $X \cong L + \mathbb{Q}[I_k] + R$ and exactly one of the following holds:
\begin{itemize}
    \item[1.] $R \cong L \cong 1$, and $I_k \cong 1$ for every index $k$, that is, $X \cong 1 + \mathbb{Q} + 1$,
    \item[2.] There is an index $k_0$ such that $R + L \cong I_{k_0}$, but no index $k$ such that $L \cong I_k$ and no index $k$ such that $R \cong I_k$; furthermore, for every index $k$, we have that every point in $I_k$ has both a successor and predecessor,
    \item[3.] There is an index $k_0$ such that $R + L \cong I_{k_0}$ and an index $k_1$ such that $L \cong I_{k_1}$, but no index $k$ such that $R \cong I_k$; furthermore, for every index $k$, we have that every point in $I_k$ has a successor,
    \item[4.] There is an index $k_0$ such that $R + L \cong I_{k_0}$ and an index $k_1$ such that $R \cong I_{k_1}$, but no index $k$ such that $L \cong I_k$; furthermore, for every index $k$, we have that every point in $I_k$ has a predecessor,
    \item[5.] There are indices $k_0, k_1, k_2$ such that $R + L \cong I_{k_0}$, $L \cong I_{k_1}$, and $R \cong I_{k_2}$.
\end{itemize} \qed
\end{cor3}

\section{Open problems}

What can we say about uncountable self-similar orders and uncountable left-absorbing orders? If $X$ is a self-similar order of any cardinality, then as in the proof of Theorem \ref{thm1} we can define a relation $\sim$ on $X$ by the rule $x \sim y$ if the interval $[\{x, y\}]$ does not contain a convex copy of $X$. By the same argument given in the proof, $\sim$ is a condensation of $X$ that organizes $X$ into convex equivalence classes $c(x)$ that are maximal with respect to not containing a convex copy of $X$, and the condensed order $X / \sim$ is dense. If $X / \sim$ is countable, it is isomorphic to one of $\mathbb{Q}$, $1 + \mathbb{Q}$, $\mathbb{Q} + 1$, or $1 + \mathbb{Q} + 1$, and it follows that $X$ is isomorphic to an order of the form $L + \mathbb{Q}[I_k] + R$ in the same sense as before, except now the orders $I_k$ appearing in the shuffle $\mathbb{Q}[I_k]$ may be uncountable. Theorems 1 and 2 apply to such $X$, otherwise verbatim.

If $X / \sim$ is uncountable, the situation is much more open. Even for self-similar orders of size $\aleph_1$, any classification will likely depend on the particular model of set theory in which we work. If the continuum hypothesis holds, then the class of orders of size $\aleph_1$ contains as a subclass all suborders of $\mathbb{R}$ of the same cardinality as $\mathbb{R}$, a class that is known to be extremely wild. Perhaps a more tractable problem than classifying the self-similar orders of size $\aleph_1$ in general is to classify such orders under the assumption of a strong set-theoretic forcing axiom like the Proper Forcing Axiom (PFA). PFA implies that the continuum is $\aleph_2$. Moreover, there is a well-developed structure theory for the class of linear orders of size $\aleph_1$ under PFA; see \cite[Section 3]{Moore}. This structure theory may help in understanding the self-similar orders and left-absorbing orders of size $\aleph_1$. 
\,\ \\

\underline{Problem 1}: Assume PFA. Classify the self-similar and left-absorbing orders of cardinality $\aleph_1$.
\,\ \\

Another approach is to study abstractly the classes of orders that are absorbed by some fixed order. For a linear order $X$, define the \emph{absorption spectrum} of $X$ to be the class of order types $\mathscr{A}_X = \{A: AX \cong X\}$. Observe that $1 \in \mathscr{A}_X$ for every $X$, where $1$ denotes the order type of a singleton. An order $X$ is left-absorbing if $\mathscr{A}_X$ contains an order type other than $1$. 

Theorem \ref{thm2} can be viewed as a classification of the classes of order types $\mathscr{A}$ for which there is a countable order $X$ such that $\mathscr{A} = \mathscr{A}_X$. For example, if $\mathscr{A}$ consists of the countable order types with a left endpoint, then $\mathscr{A} = \mathscr{A}_X$ for any countable order $X$ of the form $X = I_{k_0} + \mathbb{Q}[I_k]$. 

We isolate several abstract properties of absorption spectra. 

\theoremstyle{definition}
\newtheorem{prop4}[thm1]{Proposition}
\begin{prop4}\label{prop4} \,\ 
Suppose that $X$ is a linear order and $\mathscr{A}_X$ is its absorption spectrum. Then:
\begin{itemize}
    \item[1.] $1 \in \mathscr{A}_X$,
    \item[2.] If $A \in \mathscr{A}_X$, and $I_a \in \mathscr{A}_X$ for every $a \in A$, then $A(I_a) \in \mathscr{A}_X$,
    \item[3.] For all order types $A$ and $B$, we have $A + 1 + B \in \mathscr{A}_X$ if and only if $A + 1 \in \mathscr{A}_X$ and $1 + B \in \mathscr{A}_X$.
\end{itemize}
\end{prop4} 

\begin{proof}
We already observed $(1.)$, and $(2.)$ follows from the fact that products distribute over replacements on the right, so that $A(I_a)X \cong A(I_aX) \cong AX \cong X$. 

For $(3.)$, we will need the following fact, due to Lindenbaum: if $A$ and $B$ are linear orders such that $A$ is isomorphic to an initial segment of $B$ and $B$ is isomorphic to a final segment of $A$, then $A \cong B$. Suppose first that $A + 1 + B \in \mathscr{A}_X$. We prove that $A + 1 \in \mathscr{A}_X$, i.e. that $(A + 1)X \cong X$. Observe that $(A + 1)X$ is isomorphic to an initial segment of $(A + 1 + B)X \cong (A+1)X + BX$. Since $A + 1 + B \in \mathscr{A}_X$, it follows $(A + 1)X$ is isomorphic to an initial segment of $X$. On the other hand, $X$ is isomorphic to a final segment of $(A + 1)X \cong AX + X$. By Lindenbaum's theorem, we have $(A+1)X \cong X$, as desired. The proof that $(1 + B)X \cong X$ is symmetric.

Conversely, suppose that $A + 1$ and $1 + B$ belong to $\mathscr{A}_X$. Then $AX + X \cong X + BX \cong X$. Observe that $(A + 1 + B)X \cong AX + X + BX$. Since $X + BX \cong X$, we have $AX + (X + BX) \cong AX + X \cong X$, giving $A + 1 + B \in \mathscr{A}_X$, as desired. 
\end{proof}
\,\ 

Condition (3.) in the proposition may seem peculiar, but in conjunction with condition (1.) it implies the following perhaps more intuitive closure property of $\mathscr{A}_X$: if $A \in \mathscr{A}_X$ and $[a, a']$ is a closed interval in $A$, then $[a, a']$ (viewed as an order type) belongs to $\mathscr{A}_X$.

Suppose that $\mathscr{A}$ is a class of order types satisfying the conditions $(1.)$, $(2.)$, and $(3.)$ from Proposition \ref{prop4}. Is $\mathscr{A}$ the absorption spectrum for some order $X$? Not necessarily. For example, if $\mathscr{A}$ consists of all order types of cardinality at most $\aleph_1$, then $\mathscr{A}$ satisfies $(1.) - (3.)$. But for any given order $X$, we have $\mathbb{N} X \not\cong \omega_1 X$, since $\mathbb{N} X$ and $\omega_1 X$ have distinct cofinalities. Thus it cannot be that $X \cong \mathbb{N} X \cong \omega_1 X$. Since $\mathbb{N}$ and  $\omega_1$ belong to $\mathscr{A}$, it follows $\mathscr{A} \neq \mathscr{A}_X$. 
\,\ \\

\underline{Problem 2}. Are there conditions extending those from Proposition \ref{prop4} such that a class of order types $\mathscr{A}$ satisfies the conditions if and only if $\mathscr{A} = \mathscr{A}_X$ for some order $X$?
\,\ \\

\underline{Problem 3}. Fix a left-absorbing order $X$. What can be said about the orders $Y$ such that $\mathscr{A}_Y = \mathscr{A}_X$?
\,\ \\

\end{document}